\newcommand\N{\mathbf{N}}
\newcommand\R{\mathbf{R}}
\newcommand{\eps}{\varepsilon}
\newcommand{\HH}{{\mathcal H}}
\renewcommand{\H}{\HH^1}
\newcommand{\bib}[4]{\bibitem{#1}{\sc#2: }{\it#3. }{#4.}}
\newcommand{\forget}[1]{}
\def\Om{{\Omega}}  %%%{{\bar{\Omega}}}
\def\om2{{\Om\times\Om}}
\def\F{{\mathcal F}}
\def\diam{\mathrm{diam}\,}
\def\div{\mathrm{div}\,}
\def\dist{\mathrm{dist}\,}
\def\eps{\varepsilon}
\newcommand{\Ll}{\mathcal{L}}
\newcommand{\res}{\mathop{\hbox{
                                \vrule height 7pt width .5pt depth 0pt
                                \vrule height .5pt width 6pt depth 0pt}}
                                \nolimits}
\newtheorem{theorem}{Theorem}[section]
\newtheorem{proposition}[theorem]{Proposition}
\newtheorem{definition}[theorem]{Definition}
\newtheorem{remark}[theorem]{Remark}
\newcommand{\defeq}{:=}
\def\la{\langle}
\def\ra{\rangle}
\title[Stationary points for average distance functional]{Stationary configurations for the average distance functional and related problems}
\author{G.~Buttazzo}
\address{Dipartimento di Matematica, Universit\`{a} di Pisa, Largo B.~Pontecorvo~5, 56127 Pisa, Italy}
\email{\tt buttazzo@dm.unipi.it}
\author{E.~Mainini}
\address{Scuola Normale Superiore, Piazza dei Cavalieri 7, I-56126 Pisa, Italy}
\email{\tt edoardo.mainini@sns.it}
\author{E.~Stepanov}
\address{Dipartimento di Matematica, Universit\`{a} di Pisa, Largo B.~Pontecorvo~5, 56127 Pisa, Italy}
\email{\tt stepanov.eugene@gmail.com}
\begin{document}
\maketitle

\begin{abstract}
For a functional %$\F(\Sigma)$
defined on the class of closed
one-dimensional connected subsets of $\R^n$ we consider the
corresponding minimization problem and we give suitable first
order necessary conditions of optimality. The cases studied here
are the average distance functional arising in the mass
transportation theory, and the energy related to an elliptic PDE.
\end{abstract}

\vspace{0.5cm}

\textbf{AMS Subject Classification:} 49K10, 49K20, 49Q10 (primary); 49K27, 49Q20 (secondary)

\textbf{Keywords:} average distance functional, Euler equation, stationary point

\vspace{0.5cm}

%\maketitle %\tableofcontents

\section{Introduction}\label{intro}

In this paper we consider functionals $\F(\Sigma)$ defined on the class of all closed
connected subsets of $\R^n$ and the corresponding minimization problems
\begin{equation}\label{minpb}
\min\big\{\F(\Sigma)\ :\ \Sigma\hbox{ closed connected subset of }\R^n\big\}.
\end{equation}
Due to the fact that the class of closed connected sets has good
compactness properties with respect to the Hausdorff convergence,
mild coercivity assumptions on $\F$ give the existence of
minimizers for problem~(\ref{minpb}). We are interested in finding
``first order'' necessary optimality conditions satisfied by
the minimizers $\Sigma$ of~(\ref{minpb}).

The case we consider is the \emph{average distance functional}
\begin{equation}\label{functional}
\F(\Sigma):=\int_{\R^n}\dist(x,\Sigma)\,d\mu(x)+\lambda\H(\Sigma),
\end{equation}
where $\mu$ is a given finite
nonnegative Borel measure over $\R^n$
with compact support, and the penalization term
$\lambda\H(\Sigma)$ with $\lambda>0$ is added to give a suitable
coercivity to $\F$ and to prevent minimizing sequences to spread
over all the space. A simple and standard argument involving
Blaschke and Go\l\c ab theorems gives the existence of minimizers
of $\F$. Of particular interest for us will be situations when
$\mu$ is a uniform measure over some open set $\Omega\subset\R^n$,
i.e. $\mu=\Ll^n\res \Omega$.

The average distance term in~(\ref{functional}) comes from mass
transport theory and describes for instance the total transportation
cost to move a mass $\mu$ of residents to a public transport network
$\Sigma$. This last is the unknown of the problem and has to be
designed in order to minimize $\F$, also taking into account the
construction costs which here are taken as proportional to
$\H(\Sigma)$. The minimization problem ~(\ref{minpb}), as well as
some qualitative properties of its minimizers, have been studied in
several recent papers (see
e.g.~\
Buttazzo, Oudet, Stepanov, 2002, Buttazzo, Santambrogio, 2007, Buttazzo, Stepanov, 2003, 2004, 
Paolini, Stepanov, 2004, Santambrogio, Tilli, 2004, Stepanov, 2006)
%{ButOudSte02,ButSantambr07,ButSte03,ButSte04pal,PaoSte04,TilliSantAmb04,Ste06}
to which we refer the interested reader.  Our goal is to find
``first order'' conditions of differential character satisfied by
the minimizers of ~(\ref{functional}). Such conditions will open the
way to define a natural notion of stationary (or critical) points of
~(\ref{functional}). The main difficulty, which is quite common in
shape optimization problems, is that the domain of definition of
this functional (i.e. the class of closed connected subsets of
$\R^n$) does not possess any natural differentiable structure, and
the usual ``first variation'' argument has to be intended in a
suitable way.

In the last section we consider a similar case arising from the theory of elliptic equations:
\begin{equation}\label{ellpb1}
\F(\Sigma)\defeq  \int_\Omega u_\Sigma(x)f(x)\,dx+\lambda\H(\Sigma),
\end{equation}
where $\Omega\subset\R^2$ is a given bounded open subset, $f$ is a
given $L^2(\Omega)$ function, and $u_\Sigma$ is the unique solution
of the PDE
\[
\left\{\begin{array}{rcl}
-\Delta u&=f& \hbox{ in }\Omega\setminus\Sigma,\\
u& =0& \hbox{ on }\partial\Omega\cup\Sigma.
\end{array}\right.
\]
One has to remark that while a lot of properties are known for
minimizers of the average distance functional
(see~\ Buttazzo, Oudet, Stepanov, 2002, Buttazzo, Stepanov, 2003, Paolini, Stepanov 2004, Stepanov, 2006),
%{ButOudSte02,ButSte03,PaoSte04,Ste06}),
 like partial
regularity, absence of loops, topological properties (finite
number of branching points, each of which is a regular tripod), no
such property has been studied for minimizers of~(\ref{ellpb1}).

\section{The Euler equation for the average distance functional}\label{seuler}

For a compact set $\Sigma\subset \R^n$ we denote by $\pi^\Sigma$
the projection map to $\Sigma$ (i.e.\ such that
$\pi^\Sigma(x)\in\Sigma$ is one of the nearest point in $\Sigma$ to $x\in
\R^n$). This map is uniquely defined everywhere except the \emph{ridge} set
$\mathcal{R}_\Sigma$, which is defined as the set of all
$x\in\R^n$ for which the minimum distance to $\Sigma$ is attained
at more than one point. It is well known that
$\mathcal{R}_\Sigma$ is the set of non differentiability points
of the distance function to $\Sigma$ (that is, of the map
$x\in\R^n\mapsto\dist(x,\Sigma)$), and since the latter map is
semiconcave, this set is an $(\HH^{n-1},n-1)$-rectifiable Borel
set (see Proposition~3.7 in Mantegazza, Mennucci, 2003).%{MantMen03}.

We will denote by $B_r(x)\subset\R^n$ the open ball with radius
$r>0$ and center $x\in \R^n$. The line segment with endpoints $A$
and $B$ will be denoted by $\overline{AB}$, the arc of a curve with
the same endpoints will be denoted by $\widetilde{AB}$ (usually in
this paper we will deal with arcs of circle).

To be begin with, we estimate the ascending local slope of~(\ref{functional})
defined by
\[
|\F'|(\Sigma)\defeq\limsup_{d_H(\Sigma',\Sigma)\to 0}
\frac{(\F(\Sigma')-\F(\Sigma))^+}{d_H(\Sigma',\Sigma)},
\]
where $d_H$ stands for Hausdorff distance between sets. The following simple assertion is valid.

\begin{proposition}\label{slope-euler}
If $\mu(\mathcal{R}_\Sigma)=0$, one has $|\F'|(\Sigma)\geq \lambda$.
\end{proposition}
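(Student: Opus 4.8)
The plan is to exhibit an explicit one-parameter family of competitors $\Sigma'_\delta\to\Sigma$ along which the difference quotient tends to $\lambda$; since $|\F'|(\Sigma)$ is a $\limsup$ over all nearby competitors, this already gives the desired bound. The competitors will be built by attaching to $\Sigma$ a short ``whisker'': a segment $S_\delta$ of length $\delta$ issuing from a carefully chosen point $p\in\Sigma$ in an arbitrary direction, and I set $\Sigma'_\delta:=\Sigma\cup S_\delta$. Each $\Sigma'_\delta$ is closed and connected, satisfies $\H(\Sigma'_\delta)=\H(\Sigma)+\delta$, and obeys $d_H(\Sigma'_\delta,\Sigma)\le\delta$ (every point of $S_\delta$ lies within $\delta$ of $p\in\Sigma$, while $\Sigma\subset\Sigma'_\delta$). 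Thus the length term contributes an increment of exactly $\lambda\delta$, and everything reduces to controlling how much the average-distance term can decrease.

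First I would quantify this decrease. Writing $D(\delta):=\int_{\R^n}\big(\dist(x,\Sigma)-\dist(x,\Sigma'_\delta)\big)\,d\mu(x)\ge0$, the inclusion $p\in S_\delta$ gives $\dist(x,S_\delta)\ge\dist(x,p)-\delta\ge\dist(x,\Sigma)-\delta$, so a point $x$ can gain at most $\delta$, and gains at all only if $\dist(x,p)-\dist(x,\Sigma)<\delta$. Hence, setting $A(\delta):=\{x:\dist(x,p)-\dist(x,\Sigma)<\delta\}$, one obtains $0\le D(\delta)\le\delta\,\mu(A(\delta))$. The sets $A(\delta)$ are open and decrease, as $\delta\downarrow0$, to $\Pi_p:=\{x:\dist(x,p)=\dist(x,\Sigma)\}$, the set of points admitting $p$ as a nearest point of $\Sigma$; by finiteness of $\mu$ this yields $\mu(A(\delta))\to\mu(\Pi_p)$.

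The heart of the matter is to choose $p$ so that $\mu(\Pi_p)=0$, and this is exactly where the hypothesis $\mu(\mathcal{R}_\Sigma)=0$ enters. On $\R^n\setminus\mathcal{R}_\Sigma$ the projection $\pi^\Sigma$ is single-valued, so the sets $\{x\notin\mathcal{R}_\Sigma:\pi^\Sigma(x)=p\}$, for $p\in\Sigma$, are pairwise disjoint; since $\mu$ is finite, at most countably many of them carry positive mass. Because $\Sigma$ is a nondegenerate continuum it is uncountable, so there exists $p\in\Sigma$ with $\mu(\{x\notin\mathcal{R}_\Sigma:\pi^\Sigma(x)=p\})=0$; as $\mu(\mathcal{R}_\Sigma)=0$, this means $\mu(\Pi_p)=0$. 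Equivalently, one may take $p$ to be any non-atom of the push-forward measure $\pi^\Sigma_{\#}\mu$ on $\Sigma$.

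Finally I would assemble the estimate. With $p$ chosen as above, $D(\delta)/\delta\le\mu(A(\delta))\to0$, so $\F(\Sigma'_\delta)-\F(\Sigma)=\lambda\delta-D(\delta)>0$ for small $\delta$, and
$$\frac{(\F(\Sigma'_\delta)-\F(\Sigma))^+}{d_H(\Sigma'_\delta,\Sigma)}\ \ge\ \frac{\lambda\delta-D(\delta)}{\delta}\ =\ \lambda-\frac{D(\delta)}{\delta}\ \longrightarrow\ \lambda\quad(\delta\to0),$$
whence $|\F'|(\Sigma)\ge\lambda$. The only genuine difficulty is the selection of the attachment point $p$, which is the sole place the measure-theoretic hypothesis is used; the geometric bound on $D(\delta)$ and the Hausdorff-distance estimate are elementary. (If $\Sigma$ reduced to a single point the argument would break down, but then $\H(\Sigma)=0$ and the problem is degenerate, so one assumes throughout that $\Sigma$ is a nondegenerate continuum.)
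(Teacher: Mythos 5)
Your strategy is the same as the paper's: attach a short ``whisker'' $S_\delta$ at a point $p\in\Sigma$ chosen so that the set of points having $p$ as a nearest point is $\mu$-null, bound the decrease of the average-distance term by $\delta\,\mu(A(\delta))$ with $\mu(A(\delta))\to\mu(\Pi_p)=0$, and read off the slope. In fact your measure-theoretic bookkeeping is more careful than the paper's: the paper asserts that its sets $G_\eps$ decrease to $\{x\}$, whereas the correct limit set is your $\Pi_p$, whose $\mu$-nullity requires both the hypothesis $\mu(\mathcal{R}_\Sigma)=0$ and the countability argument selecting $p$ --- exactly as you argue.

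There is, however, a genuine gap: you attach the whisker ``in an arbitrary direction'', and your careful choice concerns only the point $p$, never the direction. With an arbitrary direction the identity $\H(\Sigma'_\delta)=\H(\Sigma)+\delta$, on which the whole computation rests, is simply false, and the construction can collapse entirely. Take $\Sigma$ a line segment and $\mu=\Ll^2\res B$ for a ball $B$ containing it: then every interior point of $\Sigma$ is a legitimate choice of $p$ (the push-forward $\pi^\Sigma_{\#}\mu$ has atoms only at the two endpoints), in particular the midpoint; if the whisker there points along $\Sigma$, then $S_\delta\subset\Sigma$, so $\Sigma'_\delta=\Sigma$, the length increment is $0$, $d_H(\Sigma'_\delta,\Sigma)=0$, and the difference quotient is not even defined. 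More generally, whenever $S_\delta$ overlaps $\Sigma$ in positive length one only has $\H(\Sigma'_\delta)=\H(\Sigma)+\H(S_\delta\setminus\Sigma)$, and your estimate $D(\delta)\le\delta\,\mu(A(\delta))$ becomes too weak: the numerator is $\lambda\H(S_\delta\setminus\Sigma)-D(\delta)$, and the ratio $\delta/\H(S_\delta\setminus\Sigma)$ may blow up as $\delta\to 0$. This is precisely what the paper's extra requirement $\pi^\Sigma(I_\eps)=x$ (every point of the whisker has $x$ as nearest point of $\Sigma$) is there for: it forces $I_\eps\cap\Sigma=\{x\}$, hence the exact length increase $\eps$ and $d_H(\Sigma_\eps,\Sigma)=\eps$. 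To repair your argument you must impose such a condition, e.g.\ take $p=\pi^\Sigma(y)$ for some $y\notin\Sigma$ and $S_\delta\subset\overline{py}$, which gives $\dist(z,\Sigma)=|z-p|$ for all $z\in S_\delta$; but then you must also check that some point of this ``reachable'' type is a non-atom of $\pi^\Sigma_{\#}\mu$, which requires uncountably many reachable points --- an existence statement that is used implicitly, and not proved, in the paper as well.
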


\begin{proof}
Let $x\in \Sigma$ be such that $\mu((\pi^\Sigma)^{-1}(\{x\}))=0$
(all but a countable number of points of $\Sigma$ have this
property). Let then $\Sigma_\eps:=\Sigma\cup I_\eps$, where
$I_\eps$ stands for the line segment of length $\eps>0$, with one
of the endpoints $x$ and such that $\pi^\Sigma(I_\eps)=x$. Then
$d_H(\Sigma_\eps,\Sigma)=\eps$ and
$\H(\Sigma_\eps)=\H(\Sigma)+\eps$. On the other hand, denoting
\[
G_\eps\defeq \{z\in \R^n\,:\, \dist(z,\Sigma)\geq\dist(z,I_\eps)\},
\]
we have that
\[\begin{array}{lll}\vspace{6pt}
\displaystyle\int_{\R^n} \dist(x,\Sigma)\,d\mu(x)&\displaystyle\geq \int_{\R^n}
\dist(x,\Sigma_\eps)\,d\mu(x)\\&\displaystyle\geq \int_{\R^n}
\dist(x,\Sigma)\,d\mu(x) -\eps \mu(G_\eps).
\end{array}\]
Thus
\[
%\begin{array}{rcl}
|\F'|(\Sigma)\geq \displaystyle
\limsup_{d_H(\Sigma_\eps,\Sigma)\to 0}
\frac{(\F(\Sigma_\eps)-\F(\Sigma))^+}{d_H(\Sigma_\eps,\Sigma)}
\geq  \displaystyle \lim_{\eps\to 0^+}\frac{(\lambda\eps-\eps
\mu(G_\eps))^+}{\eps},
%\end{array}
\]
and to conclude the proof it suffices to mind that
$\mu(G_\eps)=o(1)$, because $G_\eps \searrow \{x\}$ as $\eps\to
0^+$.
\end{proof}

The above proposition in fact means that for the
functional~(\ref{functional}) no set $\Sigma$ (not even a minimizer)
is stationary in the strong sense, i.e.\ is such that
\[
\F(\Sigma')=\F(\Sigma)+o(d_H(\Sigma, \Sigma'))
\]
as $\Sigma'\to \Sigma$ in Hausdorff distance. Therefore, in search
for the natural notion of stationary points of $\F$ we have
to restrict the set of admissible variations of $\Sigma$. For this
purpose let  $\phi_\eps\colon \R^n\to \R^n$ be a one parameter group
of diffeomorphisms satisfying
\begin{equation}\label{generator}
    \phi_\eps(x)=x+\eps X(x)+o(\eps),
\end{equation}
as $\eps\to 0$, where $X\in C^\infty_0(\R^n;\R^n)$. We will write Euler
equation for the functional~(\ref{functional}) by considering
admissible variations of the type $\Sigma_\eps
\defeq \phi_\eps(\Sigma)$.

We recall the notion of  generalized mean curvature
(from Bouchitt\'e, Buttazzo, Fragal\`a, 1997). The generalized mean curvature
$H_\Sigma$ of a countably $(\HH^k, k)$-rectifiable set $\Sigma\subset \R^n$
(or, in terms of the above reference, of the measure
$\HH^k\res\Sigma$) is the vector-valued distribution defined by the
relationship
\[
\la X,H_\Sigma\ra := -\int_{\Sigma} \div^{\Sigma}\, X\, d\HH^k\,
\]
for all $X\in C_0^\infty(\R^n,\R^n)$, where $\div^\Sigma$ stands for
the tangential divergence operator (i.e.\ projection of the
divergence to the approximate tangent space of $\Sigma$ at
$\HH^k$-a.e.\ point of $\Sigma$).
%Of course, $H_\Sigma$ is a
%(signed) Radon measure, if the quantity
%$$
%|H_\Sigma|(D):= \sup\left\{ \int_{\Sigma\cap D}
%\mbox{div}^{\Sigma}\, X\, d\HH^k\,:\, X\in C_0^\infty(D;\R^n),
%||X||_\infty=1 \right\},
%$$
%is finite for every open set $D\subset \R^n$.
We have then the
following result.

\begin{theorem}\label{euler}
Let $\mu$ be a Borel measure such that \[
\mu(E)=0 \mbox{ whenever }
\HH^{n-1}(E) <+\infty. 
\]
Then for
all $X\in C^\infty_0(\R^n;\R^n)$ one has
\begin{equation}\label{main0}
\begin{array}{rl}\vspace{6pt}\displaystyle
\frac{\partial}{\partial\eps}\F(\Sigma_\eps){\Big|}_{\eps=0}&\displaystyle=\int_{\R^n}
\left\la
X(\pi^\Sigma(x)),\frac{\pi^\Sigma(x)-x}{|\pi^\Sigma(x)-x|}\right\ra
\,d\mu -\lambda\la H_\Sigma,X\ra \\
&\displaystyle = \int_{\R^n} \left\la X(\pi^\Sigma(x)),\nabla \dist
(x,\Sigma)\right\ra \,d\mu -\lambda\la H_\Sigma,X\ra.
\end{array}
\end{equation}
In particular, if $\Sigma$ is a minimizer of $\F$, then
\begin{equation}\label{main}
\begin{array}{rl}\displaystyle\int_{\R^n}
\left\la
X(\pi^\Sigma(x)),\frac{\pi^\Sigma(x)-x}{|\pi^\Sigma(x)-x|}\right\ra
\,d\mu =\lambda\la H_\Sigma,X\ra
\end{array}
\end{equation}
for all $X\in C^\infty_0(\R^n;\R^n)$.
\end{theorem}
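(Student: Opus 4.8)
The plan is to compute the derivative of the two terms in $\F(\Sigma_\eps)$ separately. The penalization term $\lambda\H(\Sigma_\eps)$ is the easy part: since $\Sigma$ is countably $(\HH^{n-1},n-1)$-rectifiable (being a closed connected set of finite length in the relevant case, and in general we work with the rectifiable structure), the first variation of the $\HH^1$ (more precisely $\HH^{k}$) measure under the flow $\phi_\eps$ is governed by the tangential divergence, giving
\[
\frac{\partial}{\partial\eps}\H(\phi_\eps(\Sigma)){\Big|}_{\eps=0}=\int_\Sigma \div^\Sigma X\,d\HH^{k}=-\la H_\Sigma,X\ra,
\]
which is precisely the definition of the generalized mean curvature recalled just above the statement. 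This handles the $-\lambda\la H_\Sigma,X\ra$ contribution immediately.

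The substantive work is differentiating the average distance term $D(\Sigma_\eps):=\int_{\R^n}\dist(x,\Sigma_\eps)\,d\mu(x)$. The key geometric observation is that $\Sigma_\eps=\phi_\eps(\Sigma)$, so for a fixed point $x$ the quantity $\dist(x,\Sigma_\eps)=\min_{y\in\Sigma}|x-\phi_\eps(y)|$. I would fix $x$ outside the ridge set $\Rect_\Sigma$ (so that $\pi^\Sigma(x)$ is the unique nearest point) and show that the minimizing point over $\Sigma_\eps$ is, to first order, $\phi_\eps(\pi^\Sigma(x))$. Writing $p:=\pi^\Sigma(x)$ and using $\phi_\eps(p)=p+\eps X(p)+o(\eps)$, a first-order expansion of $|x-\phi_\eps(p)|$ about $|x-p|=\dist(x,\Sigma)$ gives
\[
\frac{\partial}{\partial\eps}\dist(x,\Sigma_\eps){\Big|}_{\eps=0}=\left\la X(p),\frac{p-x}{|p-x|}\right\ra=\left\la X(\pi^\Sigma(x)),\nabla\dist(x,\Sigma)\right\ra,
\]
where the last equality uses the standard fact that $\nabla\dist(x,\Sigma)=(x-p)/|x-p|$ pointing away from $\Sigma$ — so one must keep careful track of the sign and the direction $(p-x)/|p-x|$ as written in the statement. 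The hypothesis $\mu(E)=0$ whenever $\HH^{n-1}(E)<+\infty$ is exactly what is needed to discard the ridge set: since $\Rect_\Sigma$ is $(\HH^{n-1},n-1)$-rectifiable and hence $\sigma$-finite for $\HH^{n-1}$, it is $\mu$-null, so the pointwise derivative above holds $\mu$-a.e.

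The main obstacle, and the step requiring real care, is justifying the interchange of differentiation and integration, i.e.\ passing from the pointwise derivative to $\frac{\partial}{\partial\eps}\int_{\R^n}\dist(x,\Sigma_\eps)\,d\mu$. The plan is to produce a dominating bound: since $\dist(\cdot,\Sigma_\eps)$ is $1$-Lipschitz and $\phi_\eps$ moves points by at most $\eps\|X\|_\infty+o(\eps)$, the difference quotients $\eps^{-1}(\dist(x,\Sigma_\eps)-\dist(x,\Sigma))$ are uniformly bounded by a constant depending only on $\|X\|_\infty$ (since $X$ has compact support), and $\mu$ is a finite measure. This uniform bound legitimizes dominated convergence once the a.e.\ pointwise convergence of the difference quotients is established. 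The one point demanding attention is that the pointwise limit requires the minimizer over $\Sigma_\eps$ to converge to $p$; this follows because $\pi^\Sigma(x)$ is the \emph{unique} nearest point for $x\notin\Rect_\Sigma$ and $\phi_\eps\to\mathrm{id}$ uniformly, so any near-minimizer of $|x-\phi_\eps(y)|$ must have $y$ close to $p$. Assembling the two contributions then yields~\eqref{main0}, and setting the derivative to zero at a minimizer (where it must vanish in both directions $\pm X$) gives~\eqref{main}.
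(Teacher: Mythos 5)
Your proposal is correct and follows essentially the same route as the paper: the curvature term is handled by the first-variation formula for $\HH^k$ under the flow $\phi_\eps$ (matching the definition of $H_\Sigma$), and the distance term is differentiated pointwise $\mu$-a.e.\ by comparing $\dist(x,\Sigma_\eps)$ from above with the competitor $\phi_\eps(\pi^\Sigma(x))$ and from below via the (pulled-back) nearest point on $\Sigma_\eps$, using the hypothesis on $\mu$ to discard the $(\HH^{n-1},n-1)$-rectifiable ridge sets, and concluding by dominated convergence. Your lower-bound variant (any near-minimizer over $\Sigma_\eps$ must converge to $\pi^\Sigma(x)$ by uniqueness of the projection) only requires discarding $\mathcal{R}_\Sigma$ rather than the countably many ridge sets $\mathcal{R}_{\Sigma_{\eps_\nu}}$ used in the paper, but this is a cosmetic difference of execution, not a different method.
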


\begin{proof}
First of all, we perform the variation for the first term. We adopt
the method of calculation of the derivative of the distance function
with respect to the variation of the set, used
in Lemma 4.5 of Ambrosio, Mantegazza (1998).
%, and no regularity
%assumptions on $\Sigma$ are needed.
Clearly, for $z:=\phi_\eps(\pi^\Sigma(x))$ one has
\[
\begin{array}{rl}\vspace{6pt}
\displaystyle\dist(x,\Sigma)&\displaystyle =|\pi^\Sigma(x)-x|,\\\displaystyle
\dist(x,\Sigma_\eps) &\displaystyle\leq |z-x|.
\end{array}
\]
 From ~(\ref{generator}) we get, for $\eps\to 0$,
\[
\begin{array}{rl}\vspace{6pt}
\displaystyle
|z-x|^2&\displaystyle= \left\la\pi^\Sigma(x)-x+\eps
X(\pi^\Sigma(x)),\pi^\Sigma(x)-x+\eps
X(\pi^\Sigma(x))\right\ra +o(\eps)\\\vspace{6pt}
&\displaystyle=|\pi^\Sigma(x)-x|^2+2\left\la\pi^\Sigma(x)-x,\eps X(\pi^\Sigma(x))\right\ra+o(\eps)\\
&\displaystyle=|\pi^\Sigma(x)-x|^2\left(1+2\left\la\frac{\pi^\Sigma(x)-x}{|\pi^\Sigma(x)-x|^2},\eps
X(\pi^\Sigma(x))\right\ra+o(\eps)\right).
\end{array}
\]
Then
\[
\begin{array}{rl}\vspace{6pt}
\displaystyle
 \dist(x,\Sigma_\eps)-\dist(x,\Sigma)&\displaystyle\leq |z-x|-|\pi^\Sigma(x)-x|\\
%&=|\pi^\Sigma(x)-x|\left(\sqrt{1+2\left\la\frac{\pi^\Sigma(x)-x}{|\pi^\Sigma(x)-x|^2},\eps
%X(\pi^\Sigma(x))\right\ra+o(\eps)}-1\right)\\
&\displaystyle=\eps\left\la\frac{\pi^\Sigma(x)-x}{|\pi^\Sigma(x)-x|},
X(\pi^\Sigma(x))\right\ra +o(\eps),
\end{array}
\]
and we deduce
\begin{equation}\label{first}
    \limsup_{\eps\to 0}\frac{1}{\eps}(\dist(x,\Sigma_\eps)-\dist(x,\Sigma))\leq
     \left\la\frac{\pi^\Sigma(x)-x}{|\pi^\Sigma(x)-x|}, X(\pi^\Sigma(x))\right\ra.
\end{equation}
On the other hand, consider a sequence $\eps_\nu\to 0^+$ for
$\nu\to\infty$. The set of points $x\in \R^n$ for which  both
$\pi^\Sigma(x)$ %$\pi^\eps(x)$
and $\pi^{\Sigma_{\eps_\nu}}(x)$ are singletons for any $\nu\in \N$
is of full measure $\mu$ in $\R^n$ (the complement is a countable
union of ridge sets $\mathcal{R}_{\Sigma_\nu}$ and $\mathcal{R}_{\Sigma}$ which are
all $(\HH^{n-1}, n-1)$-rectifiable, hence $\mu$-negligible). For all such $x$, since $\phi_\eps$
is invertible for all sufficiently small $\eps$, let
$\zeta:=\phi_{\eps_\nu}^{-1}(\pi^{\Sigma_{\eps_\nu}}(x))$, so that
 \[ \begin{array}{rl}\vspace{6pt}
\displaystyle\dist(x,\Sigma_{\eps_\nu})&\displaystyle=|\phi_{\eps_\nu}(\zeta)-x| ,\\\displaystyle
\dist(x,\Sigma)&\displaystyle\leq |\zeta-x|.
\end{array} \]
Again we have
\[ \begin{array}{l}\vspace{6pt}\displaystyle
 |\phi_{\eps_\nu}(\zeta)-x|-|\zeta-x|\\
  \displaystyle\qquad\qquad=|\zeta -x|\left(
 \sqrt{1+2\left\la\frac{\zeta-x}{|\zeta-x|^2},\eps_\nu
 X(\zeta)\right\ra
  +o(\eps_\nu)}-1\right)\\
  \qquad\qquad\displaystyle=\eps_\nu\left\la\frac{\zeta-x}{|\zeta-x|},
 X(\zeta)\right\ra + o(\eps_\nu).
\end{array} \]
Therefore,
\[
\dist(x,\Sigma_{\eps_\nu})-\dist(x,\Sigma)\geq
\eps_\nu\left\la\frac{\zeta-x}{|\zeta-x|},
 X(\zeta)\right\ra + o(\eps_\nu).
\]
Passing to the limit as $\nu\to \infty$, we get
\begin{equation}\label{second}
     \left\la\frac{\pi^\Sigma(x)-x}{|\pi^\Sigma(x)-x|}, X(\pi^\Sigma(x))\right\ra
     \leq\liminf_{\nu\to\infty}\frac{1}{\eps_\nu}
     \left(\dist(x,\Sigma_{\eps_\nu})-\dist(x,\Sigma)\right).
\end{equation}
Combining~(\ref{first}) with~(\ref{second}), we get for $\mu$-a.e.\
$x\in\R^n$,
\[
%\begin{equation}\label{third}
    \lim_{\nu\to\infty}\frac{1}{\eps_\nu}
    (\dist(x,\Sigma_{\eps_\nu})-\dist(x,\Sigma))=
    \left\la\frac{\pi^\Sigma(x)-x}{|\pi^\Sigma(x)-x|}, X(\pi^\Sigma(x))\right\ra,
%\end{equation}
\]
so that, by Lebesgue dominated convergence theorem,
\[
%\begin{equation}\label{third}
\begin{array}{l}
    \displaystyle\lim_{\nu\to\infty}\frac{1}{\eps_\nu}
    \int_\Om(\dist(x,\Sigma_{\eps_\nu})-\dist(x,\Sigma))\,d\mu\\
    \qquad\qquad\qquad\displaystyle=
    \int_\Omega\left\la\frac{\pi^\Sigma(x)-x}{|\pi^\Sigma(x)-x|}, X(\pi^\Sigma(x))\right\ra
    \,d\mu.
\end{array}
%\end{equation}
\]
Since the sequence $\eps_\nu$ is arbitrary, one has
\[
%\begin{equation}\label{third}
\begin{array}{l}
    \displaystyle    \lim_{\eps\to 0^+}\frac{1}{\eps}
    \int_\Om(\dist(x,\Sigma_{\eps_\nu})-\dist(x,\Sigma))\,d\mu\\
    \qquad\qquad\qquad\displaystyle=
    \int_\Omega\left\la\frac{\pi^\Sigma(x)-x}{|\pi^\Sigma(x)-x|}, X(\pi^\Sigma(x))\right\ra
    \,d\mu.
    \end{array}
%\end{equation}
\]
Finally, we observe that according to the Theorem 7.31
of Ambrosio, Fusco, Pallara (2000) one has
\[
\frac{d\,}{d\varepsilon}\HH^k (\Sigma_\varepsilon) = \int_{\Sigma}
\mbox{div}^{\Sigma}\, X\, d\HH^k= - \la H_\Sigma, X \ra,
\]
which concludes the proof.
\end{proof}
\begin{remark}
The assumptions of the above theorem are satisfied, in particular,
when $\mu\ll\Ll^n$.
\end{remark}

We are in a position to give the following definition.

\begin{definition}\label{def_dist_wkstat}
A closed connected set $\Sigma\subset \R^n$ will be called
stationary for the functional $\F$, if~(\ref{main}) holds.
\end{definition}

Clearly, every stationary point depends on the problem data, which
in this case is the measure $\mu$. To emphasize this dependence,
we will further sometimes say for stationary points for the
functional $\F$ that they are stationary with respect to
$\mu$. In the most important particular case we will be interested
in $\mu$ is a uniform measure over some open $\Omega\subset \R^n$
(i.e.\ $\mu=\Ll^n\res \Omega$) with $\Sigma\subset\Omega$. In such a situation we will be speaking of stationary points with respect to the set $\Omega$.

\section{Examples of regular stationary points}

We will first show that, in sharp contrast with minimizers,
stationary points may contain closed loops (i.e. homeomorphic images of
$S^1$).

\begin{proposition}
Let $\mu\defeq\Ll^2\res B_1(0)$. There exist $r<1$ such
that the circumference $\partial B_r(0)$ is a stationary point for
functional~(\ref{functional}) if and only if $\lambda <\frac12$.
Nevertheless, no circumference is a minimizer
of~(\ref{functional}), since minimizers cannot contain closed
loops.
\end{proposition}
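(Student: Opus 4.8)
The plan is to verify the stationarity identity~(\ref{main}) directly for $\Sigma=\partial B_r(0)$ and to read off for which $\lambda$ it admits a solution $r\in(0,1)$; the non-minimality will then follow from the known absence of loops in minimizers. Throughout I work in polar coordinates, writing $x=\rho\,e_\theta$ with $e_\theta=(\cos\theta,\sin\theta)$ and $d\Ll^2=\rho\,d\rho\,d\theta$. Note that $\mu=\Ll^2\res B_1(0)$ meets the hypothesis of Theorem~\ref{euler}, and that the ridge set of $\partial B_r(0)$ reduces to the single point $\{0\}$, so $\pi^\Sigma$ is defined $\mu$-almost everywhere.

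First I would compute the curvature term on the right of~(\ref{main}). Since $\partial B_r(0)$ is smooth, its generalized mean curvature coincides with the classical one. Parametrising by arclength $s=r\theta$ with unit tangent $\tau=(-\sin\theta,\cos\theta)$ and integrating by parts --- the total-derivative (tangential) terms vanishing by $2\pi$-periodicity --- I expect
\[
\int_{\partial B_r(0)}\div^{\Sigma}X\,d\H=\int_0^{2\pi}\la X(re_\theta),e_\theta\ra\,d\theta,
\]
so that $\la H_\Sigma,X\ra=-\int_0^{2\pi}\la X(re_\theta),e_\theta\ra\,d\theta$ depends only on the radial component $a(\theta):=\la X(re_\theta),e_\theta\ra$ of $X$ along the circle.

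Next I would compute the left-hand side. For $x=\rho\,e_\theta$ with $\rho\neq r$ one has $\pi^\Sigma(x)=r\,e_\theta$, so $X(\pi^\Sigma(x))=X(re_\theta)$ depends on $\theta$ only, whereas the unit vector $(\pi^\Sigma(x)-x)/|\pi^\Sigma(x)-x|$ equals $e_\theta$ when $\rho<r$ and $-e_\theta$ when $\rho>r$. Integrating first in $\rho$ produces the signed weight
\[
\int_0^r\rho\,d\rho-\int_r^1\rho\,d\rho=\frac{r^2}{2}-\frac{1-r^2}{2}=\frac{2r^2-1}{2},
\]
so the left-hand side equals $\tfrac{2r^2-1}{2}\int_0^{2\pi}a(\theta)\,d\theta$. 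Hence~(\ref{main}) becomes $\big(\tfrac{2r^2-1}{2}+\lambda\big)\int_0^{2\pi}a(\theta)\,d\theta=0$ for every admissible $X$.

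Since $a$ ranges over all smooth $2\pi$-periodic functions as $X$ varies in $C_0^\infty(\R^2;\R^2)$, and in particular $\int_0^{2\pi}a\,d\theta$ may be taken nonzero, stationarity reduces to the single scalar equation $r^2=\tfrac12-\lambda$. This has a root $r\in(0,1)$ exactly when $\lambda<\tfrac12$, in which case $r=\sqrt{\tfrac12-\lambda}<\tfrac1{\sqrt2}<1$; if $\lambda\ge\tfrac12$ no admissible $r$ exists, which gives the claimed equivalence. For the final assertion, a circumference is a closed loop, i.e.\ a homeomorphic image of $S^1$, while minimizers of~(\ref{functional}) are known to contain no closed loops (see the references cited in the introduction), so no circumference can minimize $\F$. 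I expect the only genuinely delicate bookkeeping to be the sign flip of the projection direction across $\partial B_r(0)$ and the vanishing of the tangential contribution to $\la H_\Sigma,X\ra$; both are resolved by the rotational symmetry and periodicity noted above.
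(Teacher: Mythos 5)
Your computation of the stationarity condition is correct and follows essentially the same route as the paper: split $B_1(0)$ into the inner disc and the annulus, use the sign flip of $(\pi^\Sigma(x)-x)/|\pi^\Sigma(x)-x|$ across the circle, integrate in polar coordinates to get the weight $\tfrac12(2r^2-1)$, and compare with the curvature term $\la H_\Sigma,X\ra=-\int_0^{2\pi}\la X(re_\theta),e_\theta\ra\,d\theta$. (The paper gets this last identity directly from $|H_\Sigma|=1/r$ and the inward normal, whereas you integrate $\div^\Sigma X$ by parts; also the paper takes $X$ normal to $\Sigma$ from the start, while you keep a general $X$ and track only its radial component $a(\theta)$ --- these are cosmetic differences.) Both arguments reduce~(\ref{main}) to $\bigl(r^2-\tfrac12+\lambda\bigr)\int_0^{2\pi}a(\theta)\,d\theta=0$, hence to $r=\sqrt{1/2-\lambda}$, which lies in $(0,1)$ precisely when $\lambda<\tfrac12$.

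The one place where you cut a corner is the final claim that minimizers of~(\ref{functional}) contain no closed loops, which you dispose of by citation. The results in the references listed in the introduction concern the average distance problem with a length \emph{constraint} $\H(\Sigma)\le L$, not the penalized functional~(\ref{functional}), and this is exactly why the paper does not merely cite them but includes a short self-contained adaptation. The adaptation is easy and you should include it (or at least sketch it): if a minimizer $\Sigma$ contained a loop, there would be a point $x$ on the loop with $\mu((\pi^\Sigma)^{-1}(\{x\}))=0$ such that for every small $\eps>0$ one can remove a connected piece $D_\eps\ni x$ with $\diam D_\eps=\eps$ (hence $\H(D_\eps)\ge\eps$) while keeping $\Sigma_\eps:=\Sigma\setminus D_\eps$ connected; by the triangle inequality the average distance term increases by at most $\eps\,\mu((\pi^\Sigma)^{-1}(D_\eps))$, which is $o(\eps)$ since $D_\eps\searrow\{x\}$, while the penalization term decreases by at least $\lambda\eps$, so $\F(\Sigma_\eps)<\F(\Sigma)$ for $\eps$ small --- contradicting minimality. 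With that paragraph added, your proof matches the paper's in full.
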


\begin{proof}
We set $\Sigma\defeq\partial B_r(0)$ and impose~(\ref{main}). We
choose $X$ to be normal to $\Sigma$ without loss of generality,
since the normal part only plays a role in ~(\ref{main}). If we write
the integral term in polar coordinates, the integrand depends only
on the angle. Setting $A=B_r(0)$ and $B=B_1(0)\backslash B_r(0)$,
and letting $\nu(x)$ be the outward unit normal to $\partial B_r(0)$, we get
\[ \begin{array}{rl}\vspace{6pt}\displaystyle
\int_\Omega \left\la
X(\pi^\Sigma(x)),\frac{\pi^\Sigma(x)-x}{|\pi^\Sigma(x)-x|}\right\ra
\,dx &\displaystyle= \int_A\la X(\pi^\Sigma(x)),\nu(\pi^\Sigma(x))\ra \,dx
\\&\displaystyle- \int_B \la X(\pi^\Sigma(x)),\nu(\pi^\Sigma(x))\ra \,dx,
\end{array} \]
and we can compute
\[ \begin{array}{rl}\vspace{6pt}
\displaystyle\int_A\la X(\pi^\Sigma(x)),\nu(\pi^\Sigma(x))\ra \,dx &\displaystyle=\int_0^r
\int_0^{2\pi} |X(\theta)|\,\rho\: d\rho d\theta\\&\displaystyle =\frac12 r^2
\int_0^{2\pi} |X(\theta)|d\theta,
\end{array} \]
and similarly for the integral over $B$. Moreover,
\[
    \la X,H_\Sigma\ra=-\int_{\partial B_r(0)}|H_\Sigma(x)|\la X(x),\nu(x)\ra d
    \H(x)=-\frac 1 r \int_0^{2\pi}|X(\theta)|r d\theta.
\]
So the Euler equation reads
\begin{equation}\label{Eulercirc}
    \left(
    r^2-\frac12+{\lambda}\right)\int_0^{2\pi}|X(\theta)|d\theta=0.
\end{equation}
This equation is identically satisfied, if and only if $\lambda
<1/2$, for $r=\sqrt{1/2 -\lambda}$ (of course, $\lambda=1/2$ would
also suite for~(\ref{Eulercirc}), but it corresponds to a
degenerate case when the circumference reduces to a point).

To show that minimizers of~(\ref{functional}) cannot contain closed
loops, and hence the above stationary points are not minimizers, we
may act as in the proof of absence of loops in minimizers of average
distance functionals with length constraint (see
e.g. Paolini, Stepanov, 2004,  Buttazzo, Oudet, Stepanov 2002 or Buttazzo, Stepanov, 2003). In fact,
suppose that $\Sigma$ is a minimizer containing a closed loop. Then
there is a set of positive length $C\subset \Sigma$ such that for
every $x\in C$ and for every $\eps>0$ there is a closed connected
subset $D_\eps\subset \Sigma$ such that $x\in D_\eps$, $\diam D_\eps
=\eps$ (hence $\H(D_\eps)\geq \eps$) and $\Sigma_\eps\defeq
\Sigma\setminus D_\eps$ is connected. We may suppose without loss of
generality that $\mu((\pi^\Sigma)^{-1}(\{x\}))=0$ for all $x\in C$
(since the set of atoms of the latter measure is clearly at most
countable). One has then by triangle inequality
\[
\int_{\R^n} \dist(x,\Sigma_\eps)\,d\mu(x)\leq \int_{\R^n}
\dist(x,\Sigma)\,d\mu(x) +\eps\mu((\pi^\Sigma)^{-1}(D_\eps)),
\]
and hence
\[
\F(\Sigma_\eps)\le\F(\Sigma)+\eps
\mu((\pi^\Sigma)^{-1}(D_\eps)) -\lambda \eps.
\]
Minding that $D_\eps\searrow \{x\}$ as $\eps\to 0^+$, we get
\[
\mu((\pi^\Sigma)^{-1}(D_\eps)) \to \mu((\pi^\Sigma)^{-1}(\{x\})) =0,
\]
and thus
\[
\F(\Sigma_\eps)\le\F(\Sigma)+o(\eps)-\lambda\eps
\]
as $\eps\to 0^+$, which means that $\F(\Sigma_\eps) < \F(\Sigma)$
for small $\eps >0$ concluding the proof.
\end{proof}

Let us now consider another  example of a stationary point
for~(\ref{functional}) given by Figure~\ref{Figura 1}, where the
radii of the semicircles are equal to $\sqrt{\lambda}$. Here, as
well as in all the other figures, the arrows starting at the
endpoints of $\Sigma$ indicate the directions of $-H_\Sigma$ in
these points.

\begin{proposition}\label{prop_stazsegment}
There exists a line segment which is stationary for the region $\Omega$
shown on Figure~\ref{Figura 1}.
\end{proposition}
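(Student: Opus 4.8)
The plan is to realize $\Sigma$ as a horizontal line segment and to take $\Omega$ to be the ``stadium'' region obtained by thickening $\Sigma$ into a rectangular tube symmetric about the line carrying $\Sigma$ and capping each endpoint with a half-disc of radius $\sqrt\lambda$; this is exactly the region of Figure~\ref{Figura 1}. Concretely I would set $A=(0,0)$, $B=(L,0)$, $\Sigma=\overline{AB}$ with unit tangent $\tau=(1,0)$, and
\[
\Omega=\big([0,L]\times(-\sqrt\lambda,\sqrt\lambda)\big)\cup C_A\cup C_B,
\]
where $C_A:=\{x:|x-A|<\sqrt\lambda,\ x_1<0\}$ and $C_B:=\{x:|x-B|<\sqrt\lambda,\ x_1>L\}$. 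Since stationarity is by Definition~\ref{def_dist_wkstat} just the validity of~(\ref{main}), it then suffices to verify~(\ref{main}) with $\mu=\Ll^2\res\Omega$ for every $X\in C_0^\infty(\R^2;\R^2)$, which I would do by computing the two sides separately.

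For the right-hand side I would first record that the generalized mean curvature of a segment is concentrated at its endpoints: parametrizing $\Sigma$ by arclength, so that $\div^\Sigma X=\frac{d}{ds}\la X,\tau\ra$, and integrating by parts gives
\[
\la H_\Sigma,X\ra=-\int_\Sigma\div^\Sigma X\,d\H=\la X(A),\tau\ra-\la X(B),\tau\ra.
\]
Thus $-H_\Sigma$ is the unit outward tangent at each endpoint (the arrows of Figure~\ref{Figura 1}), and the right-hand side of~(\ref{main}) equals $\lambda\big(\la X(A),\tau\ra-\la X(B),\tau\ra\big)$.

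For the left-hand side I would split $\Omega$ according to the structure of $\pi^\Sigma$. On the tube every $(x_1,x_2)$ with $0<x_1<L$ projects to $(x_1,0)$ and $\frac{\pi^\Sigma(x)-x}{|\pi^\Sigma(x)-x|}=(0,-\sign x_2)$; since $X(\pi^\Sigma(x))$ depends only on $x_1$ and the tube is symmetric in $x_2$, the fibrewise integral in $x_2$ cancels and the tube contributes nothing, consistent with $H_\Sigma$ vanishing in the interior. On $C_B$ every point projects to $B$, so $X(\pi^\Sigma(x))=X(B)$ is constant and the contribution is $\la X(B),\int_{C_B}\frac{B-x}{|B-x|}\,dx\ra$. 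The key computation is this integral: writing $x-B=(\rho\cos\phi,\rho\sin\phi)$ with $\rho\in(0,\sqrt\lambda)$ and $\phi\in(-\pi/2,\pi/2)$, the vector $\frac{B-x}{|B-x|}=(-\cos\phi,-\sin\phi)$ has vanishing vertical mean by symmetry, so $\int_{C_B}\frac{B-x}{|B-x|}\,dx=-\big(\int_{-\pi/2}^{\pi/2}\cos\phi\,d\phi\big)\frac{(\sqrt\lambda)^2}{2}\,\tau=-\lambda\tau$. The symmetric computation at $A$ yields $+\lambda\tau$, whence the left-hand side equals $\lambda\la X(A),\tau\ra-\lambda\la X(B),\tau\ra$, matching the right-hand side and establishing~(\ref{main}).

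The substantive point, and the only real obstacle, is the geometric one: recognizing that the endpoint curvature atoms of magnitude $\lambda$ must be balanced by a purely tangential pull exerted by the mass lying beyond each endpoint, that this pull from a half-disc of radius $R$ has magnitude exactly $R^2$, and hence that the caps must have radius $\sqrt\lambda$. Once this is seen, the normal balance along the interior is automatic from the reflection symmetry of $\Omega$ across the line through $\Sigma$, and the remaining verifications are the elementary integrals above.
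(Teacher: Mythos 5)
Your proof is correct and takes essentially the same approach as the paper's: the rectangular tube contributes nothing by reflection symmetry across the line through $\Sigma$, each semicircular cap of radius $\sqrt\lambda$ exerts a purely tangential pull of magnitude $\lambda$ concentrated at the nearby endpoint, and these exactly balance the endpoint curvature atoms $\pm\lambda\,\tau$. The only cosmetic difference is that you derive $\la H_\Sigma,X\ra=\la X(A),\tau\ra-\la X(B),\tau\ra$ directly from the definition by integration by parts, where the paper quotes the endpoint-Dirac formula from Bouchitt\'e, Buttazzo, Fragal\`a (1997).
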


\begin{proof}
In the example of Figure~\ref{Figura 1}, points belonging to regions $A$
and $B$ are projected to the line segment $\Sigma$ along the perpendicular, and it is clear that the symmetry of the domain yields
\[
\int_A \left\la
X(\pi^\Sigma(x)),\frac{\pi^\Sigma(x)-x}{|\pi^\Sigma(x)-x|}\right\ra
\,dx + \int_B \left\la
X(\pi^\Sigma(x)),\frac{\pi^\Sigma(x)-x}{|\pi^\Sigma(x)-x|}\right\ra
\,dx=0
\]
for any vector field $X\in C^\infty_0(\R^n;\R^n)$.

Set $X_1:=\la X,\mathbf{e}_1\ra$ and $X_2:=\la X,\mathbf{e}_2\ra$,
where $\mathbf{e}_1$, $\mathbf{e}_2$ stand for the base vectors in
$\R^2$. Let us compute the contribution of the right unit
semicircle:
\[ \begin{array}{rl}\vspace{6pt}\displaystyle
\int_D \left\la
X(\pi^\Sigma(x)),\frac{\pi^\Sigma(x)-x}{|\pi^\Sigma(x)-x|}\right\ra
\,dx
&\displaystyle=-\int_0^{\sqrt{\lambda}}\int_{-\pi/2}^{\pi/2}X_1(F)\cos\theta
\rho
d\rho d\theta\\\vspace{6pt}
&\displaystyle=-2X_1(F)\int_0^{\sqrt{\lambda}}\rho d\rho\\&=-\lambda X_1(F).
\end{array} \]
In the same way, the contribution of semicircle $C$ is given by
\[ \begin{array}{rl}\displaystyle
\int_C \left\la
X(\pi^\Sigma(x)),\frac{\pi^\Sigma(x)-x}{|\pi^\Sigma(x)-x|}\right\ra
\,dx =\lambda X_1(E).
\end{array} \]
Therefore,
\[
\int_\Omega \left\la
X(\pi^\Sigma(x)),\frac{\pi^\Sigma(x)-x}{|\pi^\Sigma(x)-x|}\right\ra
\,dx =-\lambda X_1(F) + \lambda X_1(E).
\]
On the other hand, at the endpoints $E$ and $F$ of the segment,
the distributional curvature is given by $\delta_E \mathbf{e}_1$,
$ -\delta_F \mathbf{e}_1$ where $\delta_x$ stands for the Dirac
mass concentrated at the point $x$ (see Bouchitt\'e, Buttazzo, Fragal\`a, 1997),
while at all the other points of the segment the curvature is
zero. Thus the curvature term of the Euler equation reduces to
$\lambda (X_1(F)-X_1(E))$, and hence~(\ref{main}) is satisfied.
\end{proof}
\begin{figure}\label{Figura 1}
\begin{center}
\includegraphics[width=12cm]{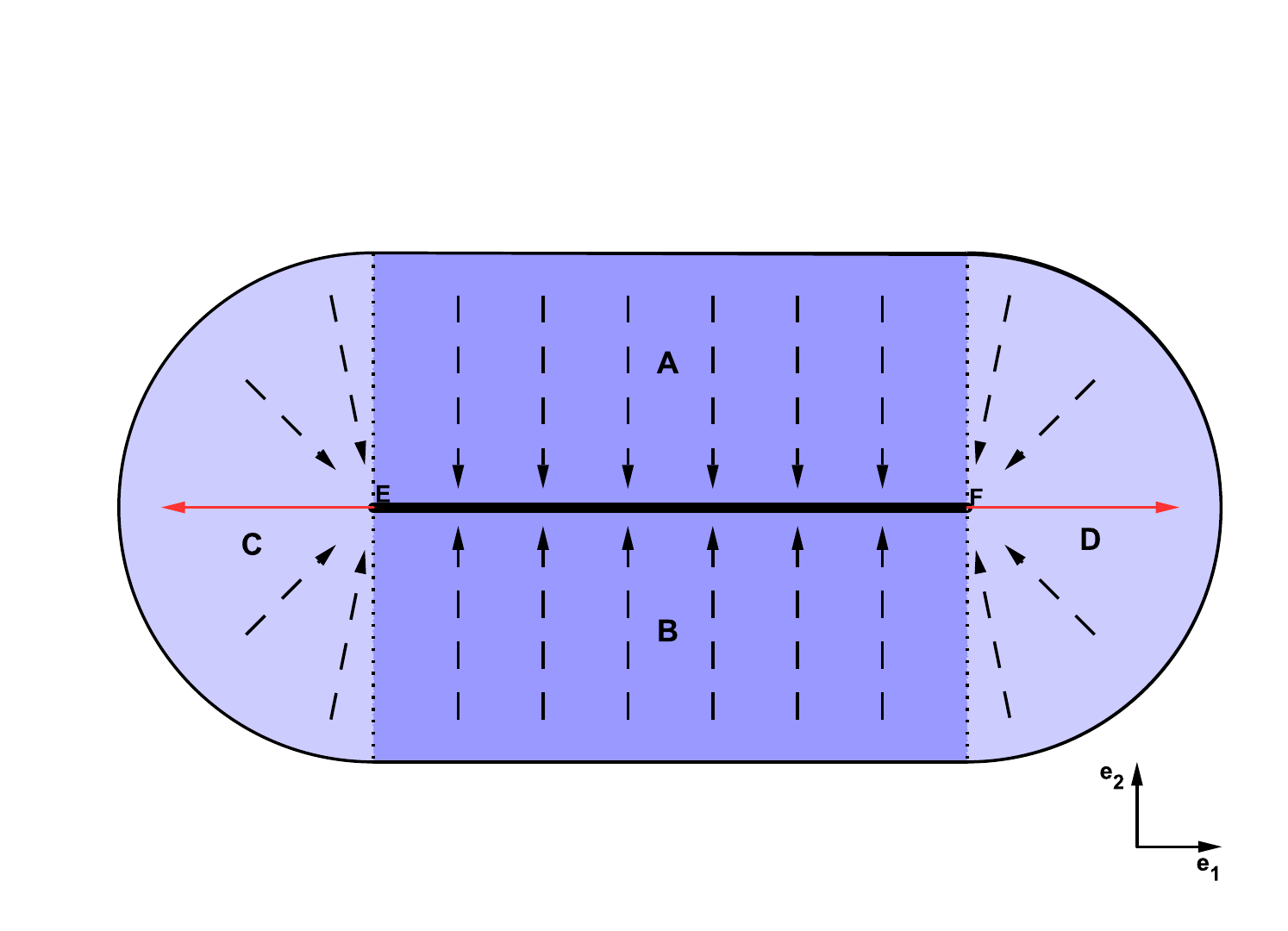}\caption{Construction of the proof of
Proposition~\ref{prop_stazsegment}}
%\footnotesize{Figura 1}
\end{center}
\end{figure}

We now show an example of a set which is never stationary (i.e. it
is not stationary for any ambient set $\Om$).

\begin{proposition}\label{prop_nonstazangle}
The line $\Sigma$ made of two segments (not reduced to a single
segment), is not stationary for any open set $\Omega\subset \R^2$.
\end{proposition}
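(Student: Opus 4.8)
The plan is to argue by contradiction. Write $\Sigma=S_1\cup S_2$ for the two segments, meeting at a common vertex $V$, with far endpoints $A$ (of $S_1$) and $B$ (of $S_2$); since $\Sigma$ is not a single segment the angle $\angle AVB$ lies in $(0,\pi)$. Let $\tau_1$ be the unit tangent of $S_1$ oriented from $A$ to $V$ and $\tau_2$ that of $S_2$ oriented from $V$ to $B$, so $\tau_1\neq\tau_2$. Suppose $\Sigma$ were stationary with respect to some open $\Omega\supset\Sigma$, i.e.\ (\ref{main}) holds with $\mu=\Ll^2\res\Omega$. The idea is to read both sides of (\ref{main}) as vector measures on $\Sigma$ and compare their parts absolutely continuous with respect to $\H\res\Sigma$. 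First I would compute the right-hand side: since each $S_i$ is straight, $H_\Sigma$ has no interior contribution, and integrating $\div^\Sigma X$ by parts along each segment exactly as in the proof of Proposition~\ref{prop_stazsegment} gives that $\lambda H_\Sigma$ is purely atomic, equal to $\lambda\big(\tau_1\delta_A-\tau_2\delta_B+(\tau_2-\tau_1)\delta_V\big)$. In particular the right-hand side carries no part absolutely continuous with respect to $\H\res\Sigma$.

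Next I would describe the left-hand side as the push-forward $\Phi:=(\pi^\Sigma)_\#\big(\nu\,\Ll^2\res\Omega\big)$, where $\nu(x):=(\pi^\Sigma(x)-x)/|\pi^\Sigma(x)-x|$, so the integral in (\ref{main}) is $\la X,\Phi\ra$. The fibers of $\pi^\Sigma$ split $\Phi$ naturally: points projecting to the relative interior of a segment produce a part absolutely continuous with respect to $\H\res\Sigma$, whereas the fibers over $A$, $B$ and $V$ are two half-disks and a full planar wedge, all of positive area, and thus produce atoms at those three points. To identify the absolutely continuous density I would use tube coordinates $(s,t)$ around the interior of $S_i$, with $s$ the arclength and $t$ the signed normal coordinate, so that $dx=ds\,dt$ and $\nu=-\sign(t)\,n(s)$; integrating $t$ over the fiber $t\in(-b(s),a(s))$ gives density $(b(s)-a(s))\,n(s)$, where $a(s),b(s)$ are the lengths of the two normal half-fibers (cut out by $\Omega$ on one side and by the competition with the other segment on the other). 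Matching absolutely continuous parts in (\ref{main}) and using that the right-hand side has none forces $a(s)=b(s)$ for $\H$-a.e.\ $s$ on each segment.

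The contradiction then comes from the vertex. On the \emph{concave} side of $S_1$, i.e.\ the side towards which $\tau_2-\tau_1$ points, the normal fiber over $p(s)$ never reaches $\partial\Omega$: it is cut by the ridge $\mathcal R_\Sigma$, which leaves $V$ along the bisector of $\angle AVB$, so its length obeys $a(s)\le C\,s$ with $C$ depending only on the angle; hence $a(s)\to0$ as $s\to0$. On the \emph{convex} side, since $\Omega$ is open and $V\in\Sigma\subset\Omega$ it contains a ball $B_{r_0}(V)$, and the outward normal half-fiber stays inside it for $p(s)$ close to $V$, so $b(s)\ge c_0>0$ for all small $s$. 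Thus $b(s)-a(s)\ge c_0/2>0$ on a whole sub-segment $\{0<s<s_0\}$ of $S_1$ adjacent to $V$, contradicting $a=b$. To make this rigorous without invoking densities I would test (\ref{main}) with $X=\chi\,n$, $n$ the (constant) normal to $S_1$ and $\chi\ge0$ a smooth bump supported in a small ball meeting $\Sigma$ only inside this sub-segment: the right-hand side vanishes (the support avoids $A,B,V$ and $S_1$ is straight), while the left-hand side is $\int\chi(p(s))\,(b(s)-a(s))\,ds\ge(c_0/2)\int\chi\,ds>0$, the desired contradiction.

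The delicate step, which I expect to require the most care, is the uniform control of the fiber geometry near $V$ that holds for \emph{every} $\Omega$: one must show both that the concave fiber is genuinely squeezed to zero by the ridge (an analysis of the equidistant set of the two segments near the corner) and that the convex fiber cannot simultaneously degenerate, which is exactly where $V\in\Omega$ is used. It is worth emphasizing that this is precisely the mechanism that fails for a single straight segment: with no genuine corner the ridge forces no asymmetry, and a symmetric choice of $\Omega$ can achieve $a\equiv b$, in agreement with the stationary segment produced in Proposition~\ref{prop_stazsegment}.
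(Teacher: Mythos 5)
Your proposal is correct and, at its core, is the same argument as the paper's: you test the Euler equation with a normal vector field supported on a sub-segment of one edge adjacent to the vertex, note that the curvature term vanishes there, and derive a contradiction because the ridge (the bisector of the corner) squeezes the fiber on the concave side to length $O(s\tan\varphi)$ while openness of $\Omega$ at the vertex keeps the fiber on the convex side bounded below --- exactly the paper's comparison of the triangle $PRQ$ against the rectangle $PRST$. The push-forward measure decomposition and the matching of absolutely continuous parts that you set up first is harmless extra packaging, which your own final paragraph correctly bypasses in favor of the direct test-field computation.
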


\begin{proof}
Let $P$ be the common vertex of the two segments (with the
aperture $2\varphi<\pi$),
 $R$ be a point on one of the two
edges, with $z\defeq |P-R|$. Let moreover $S$ be a point on the
normal to the same segment passing through $R$, with $y\defeq
|S-R|$ located in the region $B$ in Figure~\ref{Figura 2}. Since
the whole polygonal line $\Sigma$, and hence $P$, is in the
interior of
$\Om$, it is clear that %one can find $w$ and $y$ small enough such
%that, for any $z\leq w$,
the rectangle $B\defeq PRST$ (with sidelengths $z$ and $y$), is
all contained in $\Om$ for all sufficiently small $y$ and $z$. Let
finally $Q$ be a point of the intersection of the line passing
through $S$ and $R$, with the bisector of the angle formed by the
two segments of $\Sigma$ (see Figure~\ref{Figura 2}).
 Choose now a regular vector field $X$ compactly
supported in the open segment $\overline{PR}$, and normal to it,
pointing towards the region $B$ in Figure~\ref{Figura 2}. It is
clear that there is no contribution from the curvature term in the
Euler equation, since the curvature of the line segment is zero
outside its endpoints. So it remains to check the integral term.
Since $|Q-R|=z\tan\varphi$, an easy computation in the suitable
coordinate system yields
\[ \begin{array}{rl}\displaystyle
\int_B \left\la
X(\pi^\Sigma(x)),\frac{\pi^\Sigma(x)-x}{|\pi^\Sigma(x)-x|}\right\ra
\,dx &\displaystyle=-\int_B |X(\pi^\Sigma(x))|\,dx\\
&\displaystyle =-y\int_0^z |X(\zeta)|d\zeta
\end{array} \]
and
\[ \begin{array}{rl}\displaystyle
\int_A\left\la
X(\pi^\Sigma(x)),\frac{\pi^\Sigma(x)-x}{|\pi^\Sigma(x)-x|}\right\ra
\,dx &\displaystyle=\int_A
|X(\pi^\Sigma(x))|\,dx\\
&\displaystyle=z\tan\varphi\int_0^z|X(\zeta)|d\zeta.
\end{array} \]
%Notice that the last integral gives all the contributions coming
%from the convex angle, while the integral over $B$ (which is
%negative) is greater then or equal to the contribution of the
%points in the concave angle. These are the only nonnull terms in
%the Euler equation, and
Notice that $z$ can be chosen small enough such that the sum of
the above terms is strictly negative, while
\[ \begin{array}{rl}\vspace{6pt}\displaystyle
\int_\Om\left\la
X(\pi^\Sigma(x)),\frac{\pi^\Sigma(x)-x}{|\pi^\Sigma(x)-x|}\right\ra
\,dx &\displaystyle \leq \int_A \left\la
X(\pi^\Sigma(x)),\frac{\pi^\Sigma(x)-x}{|\pi^\Sigma(x)-x|}\right\ra
\,dx \\\vspace{6pt} &\displaystyle + \int_B \left\la
X(\pi^\Sigma(x)),\frac{\pi^\Sigma(x)-x}{|\pi^\Sigma(x)-x|}\right\ra
\,dx,
\end{array} \]
 that is, for sufficiently small $z$ the equation~(\ref{main})
is not satisfied.
\end{proof}
\begin{figure}
\includegraphics[width=12cm]{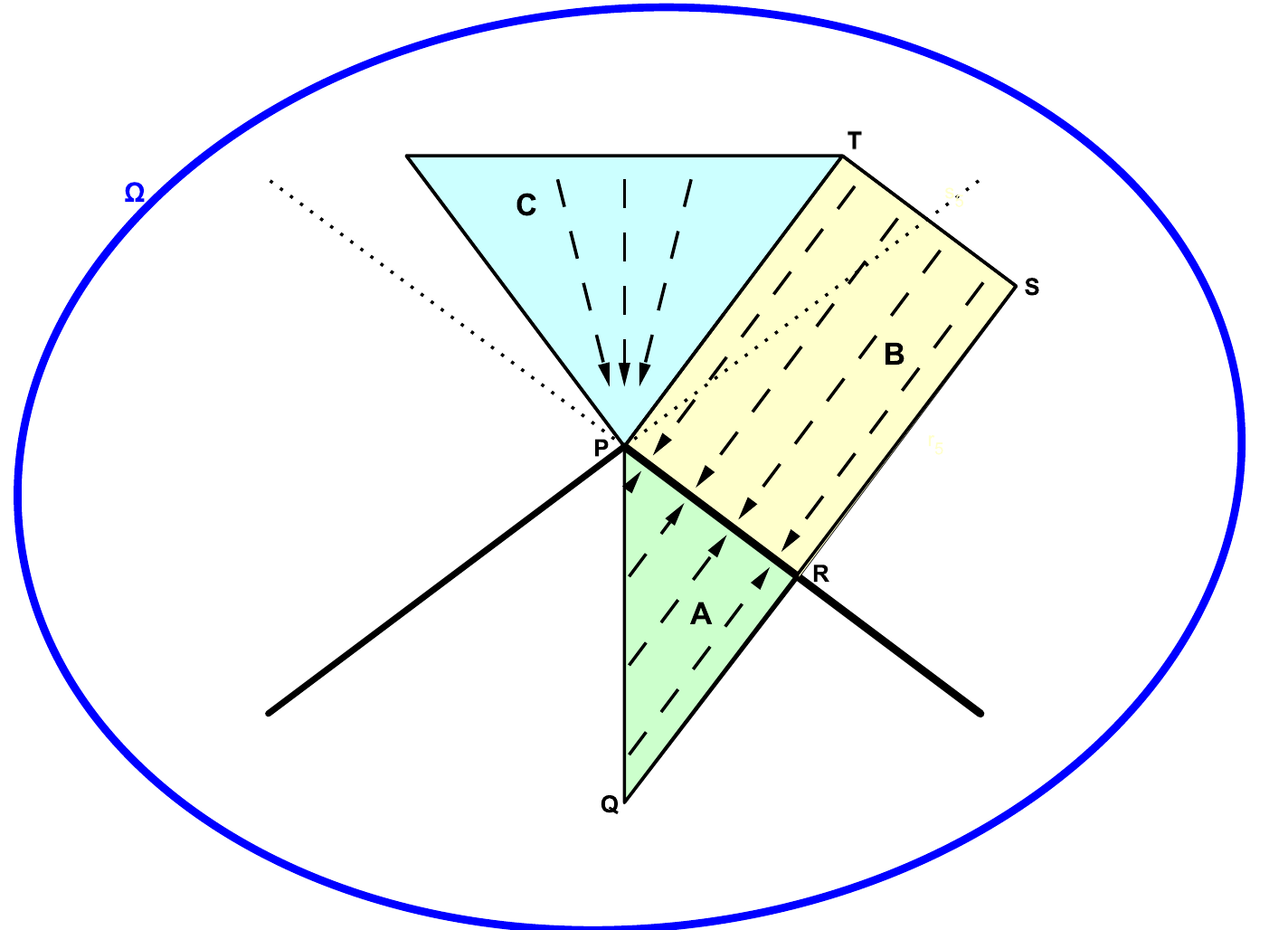}\caption{Construction of the proof of
Proposition~\ref{prop_nonstazangle}}\label{Figura 2}
%\footnotesize{Figura 2}
\end{figure}

It is worth emphasizing that it is still quite easy to find a
measure $\mu$ such that the given polygonal line is stationary
with respect to $\mu$.

\section{Examples of irregular stationary points}

In this section we will show that there exist $\Om$ and $\Sigma$
stationary in $\Om$ such that $\Sigma$ has angular points.

From now on, we will consider sets $\Sigma$ made of two arcs of
circumference with a common end point $O$. We will refer to such
sets simply as curved corners. We will say that a curved corner is
convex, if it is a convex curve (i.e.\ it intersects every line in
at most two points). %\textbf{[BUSEMANN?]}

%\begin{definition}
%A corner is of \it{type-1} if, for each arc, the half line tangent
%in $O$ with direction opposite to the arc does not meet the other
%circumference in a point different from $O$. Otherwise we call it a
%\it{type-2} corner.
%\end{definition}

\begin{proposition}\label{prop_stazcorner}
There exists a convex curved corner $\Sigma$ stationary with
respect to some open $\Omega\subset \R^2$.
\end{proposition}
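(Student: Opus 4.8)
The plan is to construct $\Sigma$ explicitly as a convex curved corner built from two symmetric arcs of circle meeting at a vertex $O$, and to engineer the ambient domain $\Omega$ so that the integral term in~(\ref{main}) exactly balances the curvature term. By the symmetry of the previous examples (especially the segment case in Proposition~\ref{prop_stazsegment}), I would place the configuration symmetrically with respect to a bisecting axis, so that the horizontal ($\mathbf{e}_1$) components of the Euler equation cancel automatically and only one scalar balance condition remains to be verified. The essential geometric input is the formula for the generalized mean curvature $H_\Sigma$: along each arc of circumference of radius $R$ the curvature vector has magnitude $1/R$ pointing toward the center, while at the free endpoints of $\Sigma$ there are Dirac contributions $\pm\delta\,\tau$ directed along the tangent (as used for the segment endpoints $E$, $F$ above, and documented in Bouchitt\'e, Buttazzo, Fragal\`a, 1997). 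At the common vertex $O$ of the two arcs there may be an additional concentrated curvature contribution coming from the jump in tangent direction, which must be accounted for.

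The key steps, in order, are as follows. First I would fix the radius of the arcs (the figures suggest a value tied to $\sqrt{\lambda}$, matching the computation in Proposition~\ref{prop_stazsegment}) and parametrize the two arcs so that the whole curve is convex and symmetric. Second, I would compute $\la H_\Sigma, X\ra$ by splitting it into the smooth arc contributions (an integral of $\frac1R\la X,\nu\ra$ against $\HH^1$ along each arc) plus the endpoint Dirac terms plus the possible vertex term. Third, I would choose the boundary of $\Omega$ so that the projection regions onto the arcs are geometrically simple — ideally so that, as in the earlier propositions, each point of $\Omega$ projects perpendicularly onto $\Sigma$ and the integral $\int_\Omega \la X(\pi^\Sigma(x)), \nabla\dist(x,\Sigma)\ra\,dx$ reduces, via polar-type coordinates centered appropriately, to a clean expression in the values of $X$ along $\Sigma$. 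Fourth, I would impose~(\ref{main}) and read off the one remaining scalar condition relating the radius, the aperture at $O$, and $\lambda$, then exhibit concrete data satisfying it.

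The main obstacle I anticipate is the behavior at the vertex $O$. Unlike the straight segment, a curved corner has a genuine angle there, so the tangential divergence $\div^\Sigma X$ picks up a singular contribution, and correspondingly the distance function and the projection map $\pi^\Sigma$ behave delicately near $O$: there is a ridge emanating from the corner (along the interior bisector), and points near it may fail to project smoothly. I would handle this by restricting the support of the test fields $X$ appropriately and by verifying that the ridge set has the measure-zero property required so that Theorem~\ref{euler} applies, and that the concentrated curvature at $O$ either vanishes by the symmetric choice of $X$ or is matched by a corresponding concentration of the integral term coming from the mass that projects onto $O$. The verification that these two singular contributions at $O$ cancel — rather than the routine arc integrals — is where the real content of the proof lies; the convexity hypothesis is what guarantees the tangent turns in the correct sense so that such a cancellation is geometrically possible.
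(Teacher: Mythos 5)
Your overall strategy coincides with the paper's: take two symmetric arcs of circle meeting at $O$, and build $\Omega$ so that the mass term in~(\ref{main}) balances the curvature term, including the Dirac contributions at the two free endpoints and at the vertex (the paper indeed balances the vertex curvature $H_\Sigma(O)=-2\sin\varphi\,\delta_O\mathbf{e}_2$ against a circular sector of points whose projection is $O$, of radius exactly $\sqrt{2\lambda}$). However, there is a genuine gap in how you propose to close the argument. Equation~(\ref{main}) is a distributional identity that must hold for \emph{every} $X\in C^\infty_0(\R^2;\R^2)$, so you may neither restrict attention to symmetric fields nor to fields vanishing at $O$: in particular your fallback option that the vertex Dirac ``vanishes by the symmetric choice of $X$'' is not available, and symmetry of the configuration does not reduce the verification to ``one scalar balance condition.'' Localizing $X$ on arbitrarily small sub-arcs of $\Sigma$ shows that the balance between projected mass and curvature must hold \emph{pointwise} in the angular variable along each arc, and separately at each endpoint and at $O$. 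This continuum of conditions is what forces the paper's $\Omega$ to be assembled from seven custom pieces: the outer regions must be bounded by the specific curve $\rho=f(\theta)=\sqrt{2R^2+2\lambda-\left(R\cos\alpha/\cos\theta\right)^2}$, chosen so that, together with the region inside the corner, every sub-arc sees net moment exactly $-\lambda$ per unit angle; rectangles at $P$ and $Q$ sized by the integral condition~(\ref{h}) to reproduce the endpoint Diracs; and the sector at $O$ of radius $\sqrt{2\lambda}$. No finite number of tunable parameters of a ``geometrically simple'' domain can satisfy all of these identities at once.

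Two further points in your plan aim in the wrong direction. First, in the paper's construction the radius $R$ and the aperture $2\varphi$ are essentially free, and the domain is adapted to them; there is no closing relation ``between the radius, the aperture at $O$, and $\lambda$'' to read off (your guess that the arc radius should be tied to $\sqrt{\lambda}$ confuses the radius of the bounding semicircles in Proposition~\ref{prop_stazsegment} with the radius of the arcs of $\Sigma$, which plays a different role). Second, if the ``simple'' domain you end up with were convex, Proposition~\ref{prop_43} shows that no curved corner of aperture up to roughly $108^\circ$ can be stationary with respect to it; so the non-convex, custom-built nature of $\Omega$ is not a cosmetic feature of the paper's proof but a necessity, and any execution of your outline must produce such a domain.
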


\begin{proof}
Let $\lambda>0$ be fixed. Our construction is that shown on
Figure~\ref{Figura 3}. Namely, the set $\Sigma$ is made by two
arcs $\widetilde{QO}$ and $\widetilde{PO}$ of circumferences with
the same radius $R$ and with centers $C_1$ and $C_2$ respectively.
The points $P$ and $Q$ are chosen in such a way that both belong
to the line $v$ containing the centers of the circumferences. We
denote by $2\varphi \in[0,\pi]$ the angle between the normals in
$O$ to the respective arcs, pointing away from $v$. Then
$\alpha=\pi/2 -\varphi$ is the angle between $v$ and the ray $C_1
O$ (and also, by symmetry, between $v$ and the ray $C_2 O$). We
also assume the unit coordinate vectors $\mathbf{e}_1$ and
$\mathbf{e}_2$ to be directed as in Figure~\ref{Figura 3}.

 Now let
\[ \begin{array}{rl}\vspace{6pt}\displaystyle
%\begin{equation}\label{b}
    b & \displaystyle\defeq \sqrt{R^2+2\lambda}-R,\\\vspace{6pt}\displaystyle
%\end{equation}
%\begin{equation}\label{shape}
    f(\theta) &\displaystyle\defeq
\sqrt{2R^2+2\lambda-\left(\frac{R\cos\alpha}{\cos\theta}\right)^2}
    %=\sqrt{2R^2+2bR+b^2-\left(\frac{R\cos\alpha}{\cos\theta}\right)^2}
    ,\quad
    \theta\in[0,\alpha],\\\displaystyle
%\end{equation}
%\begin{equation}\label{r}
    r &\displaystyle\defeq \sqrt{2\lambda}.
%\end{equation}
\end{array} \]
Notice that $r>b$. Moreover, fix a $k \in (0,R(1-\cos\alpha))$ and
an $h>0$ such that
\begin{equation}\label{h}
    -\int_{-k}^k\left(\int_{-h}^0
    y(z^2+y^2)^{-1/2}dy\right)dz ={\lambda}.
\end{equation}
 Consider now the region bounded by $\Sigma$ and the
segment $\overline{PQ}$. It is divided symmetrically in two
regions $A$ and $B$ by the line $u$ passing through $O$
perpendicular to $v$. Let $C$ indicate the region identified by
the arc $\widetilde{QO}$, the ray $C_1O$, the ray $C_1 Q$ and the
curve defined by the equation $\rho=f(\theta)$ in polar
coordinates with center $C_1$ and the angle $\theta$ counted
counterclockwise increasing from $0$ to $\alpha$. Define $D$ to be
the region  symmetric to $C$ with respect to $u$. Let $E$ and $G$
be equal rectangles with an edge on $v$ of length $k$, centered in
$P$ and $Q$ respectively, with another edge of length $h$, and
belonging to the half space bounded by $v$ and not containing $O$.
Finally, let $F$ stand for the circular sector with center $O$
 and with the radius $r$ bounded by the normals to $\widetilde{QO}$ and
$\widetilde{PO}$ as in Figure~\ref{Figura 3}.

Define now $\Om\defeq A\cup B\cup C \cup D\cup E \cup F \cup G$. We
will show that $\Sigma$ is optimal with respect to such $\Om$. Let
$\nu$ be the outward normal to $\widetilde{QO}$. Points in $B$ and
$C$ are projected on $\Sigma$ to the arc $\widetilde{QO}$, and since
$f(\alpha)=R+b$ and $f(\theta)>R+b$ for $\theta\in[0,\alpha)$, we
have
\[ \begin{array}{rl}\vspace{6pt}\displaystyle
\int_C \left\la
X(\pi^\Sigma(x)),\frac{\pi^\Sigma(x)-x}{|\pi^\Sigma(x)-x|}\right\ra
\,dx =&\displaystyle-\int_0^{\alpha} \int_R^{f(\theta)}\la
X(\theta),\nu(\theta)\ra \rho d\rho d\theta\\\vspace{6pt}\displaystyle=&\displaystyle-\int_0^{\alpha}
\int_{R+b}^{f(\theta)}\la X(\theta),\nu(\theta)\ra \rho d\rho
d\theta\\&\displaystyle-\int_0^{\alpha} \int_{R}^{R+b}\la
X(\theta),\nu(\theta)\ra \rho d\rho d\theta,
\end{array} \]
but, by the definition of $b$ and $f$,
\[ \begin{array}{l}\vspace{6pt}\displaystyle
\int_0^{\alpha} \int_{R}^{R+b}\la X(\theta),\nu(\theta)\ra \rho
d\rho d\theta \displaystyle=\left(\frac12 (R+b^2) -\frac12
R^2\right)\int_0^{\alpha}\la X(\theta),\nu(\theta)\ra d\theta\\
\qquad\qquad\qquad\qquad\qquad\displaystyle=
%\frac{\l}{R}
\lambda\int_0^{\alpha}\la X(\theta),\nu(\theta)\ra d\theta,
\end{array} \]
\[ \begin{array}{rl}\displaystyle
\int_{R+b}^{f(\theta)}\rho d\rho &\displaystyle=\frac1 2(f(\theta))^2-\frac12
(R+b)^2=\frac12
R^2-\frac12\left(\frac{R\cos\alpha}{\cos\theta}\right)^2.
\end{array} \]
For the computation of the integral in the region $B$, it is easily
seen that
\begin{equation}\label{A}
    B=\left\{(\rho,\theta)\colon 0\leq\theta\leq\alpha\;,\;\frac{R\cos\alpha}{\cos\theta}\leq\rho\leq
    R\right\},
    \end{equation}
so it follows that
\[
\begin{array}{rl}\vspace{6pt}\displaystyle
\int_B\left\la
X(\pi^\Sigma(x)),\frac{\pi^\Sigma(x)-x}{|\pi^\Sigma(x)-x|}\right\ra
\,dx \displaystyle=\int_0^\alpha\la X(\theta),\nu(\theta)\ra
\int_{\frac{R\cos\alpha}{\cos\theta}}^R\rho d\rho
d\theta\\
\qquad\qquad\qquad\qquad\qquad\displaystyle=\frac12\int_0^\alpha\la
X(\theta),\nu(\theta)\ra\left(R^2-\left(\frac{R\cos\alpha}{\cos\theta}\right)^2\right)d\theta.
\end{array}
\]
Hence one obtains
\begin{equation}\label{1}
    \int_{ B\cup C} \left\la X(\pi^\Sigma(x)),\frac{\pi^\Sigma(x)-x}{|\pi^\Sigma(x)-x|}\right\ra
    \,dx=
    - \lambda %\frac{2\l}{R}
    \int_0^{\alpha}\la
X(\theta),\nu(\theta)\ra d\theta.
\end{equation}

 Now consider the curvature term of the Euler equation. Let
$H_\Sigma(\widetilde{QO})$ indicate the nonatomic part of the
curvature of the arc $\widetilde{QO}$, i.e. the part not involving
the contribution of
 endpoints.
 The term $\la H_\Sigma(\widetilde{QO}), X \ra$ is clearly equal
 to
\[
%\begin{equation}\label{4}
%    \frac{\l}R
-\int_0^\alpha\la X(\theta),\nu(\theta)\ra d\theta.
%\end{equation}
\]
We end up with
\begin{equation}\label{thefirst}
  \int_{ B\cup C} \left\la
X(\pi^\Sigma(x)),\frac{\pi^\Sigma(x)-x}{|\pi^\Sigma(x)-x|}\right\ra
    \,dx -\lambda \la H_\Sigma(\widetilde{QO}), X \ra=0
\end{equation}

By symmetry, the integral over region $A\cup D$ can be computed in
polar coordinates with respect to $C_2$ and $v$, with angle
$\theta^\prime$ counted clockwise increasing from $0$ to $\alpha$,
and has exactly the same form. Reasoning in the same way, one sees
the analogy between the terms $\la H_\Sigma(\widetilde{PO}), X
\ra$ and $\la H_\Sigma(\widetilde{QO}), X \ra$. It follows that
\begin{equation}\label{thesecond}
    \int_{ A\cup D} \left\la
X(\pi^\Sigma(x)),\frac{\pi^\Sigma(x)-x}{|\pi^\Sigma(x)-x|}\right\ra
    \,dx -\lambda \la H_\Sigma(\widetilde{PO}), X \ra=0.
\end{equation}

 Let us now compute the integrals over $E$ and
$G$. These two regions are disjoint thanks to the choice of $k$.
By~(\ref{h}) we get
\begin{equation}\label{3}
\begin{array}{rl}\vspace{6pt}
\displaystyle \int_{E} \Big\la
X(\pi^\Sigma(x)),&\displaystyle\frac{\pi^\Sigma(x)-x}{|\pi^\Sigma(x)-x|}\Big\ra
    \,dx\\\vspace{6pt}
    &\displaystyle=\displaystyle -X_2(P)\int_{-k}^k\left(\int_{-h}^0
    y(z^2+y^2)^{-1/2}dy\right)dz
    \\&\displaystyle = \lambda% \frac{\l}{R}
    X_2(P).
\end{array}
\end{equation}
Analogously the integral over $G$ is given by
\begin{equation}\label{thethird}
   \displaystyle \int_{G} \left\la
X(\pi^\Sigma(x)),\frac{\pi^\Sigma(x)-x}{|\pi^\Sigma(x)-x|}\right\ra
    \,dx =  \lambda %\frac{\l}R
X_2(Q).
\end{equation}

 For the integral over $F$, we consider
polar coordinates referred to the center $O$ with the angle
$\theta$ measured counterclockwise starting from the direction
parallel to the ray $C_1 Q$, so that
$$\frac{\pi^\Sigma(x)-x}{|\pi^\Sigma(x)-x|}=-(\cos\theta,\sin\theta),\quad x\in F.$$
Then, since in $F$ the minimum distance from $\Sigma$ is always
attained in the point $O$, we get
\begin{equation}\label{2}
\begin{array}{rl}\vspace{6pt}\displaystyle
\int_{F} \Big\la X(\pi^\Sigma(x)),&\displaystyle
\frac{\pi^\Sigma(x)-x}{|\pi^\Sigma(x)-x|} \Big\ra
    \,dx\\\vspace{6pt}
    &\displaystyle= -\int_{\frac\pi 2 -\varphi}^{\frac\pi 2
    +\varphi}\int_0^r\la X(O),(\cos\theta,\sin\theta)\ra \rho d\rho d\theta\\\vspace{6pt}
    &\displaystyle=
    -X_2(O)\int_{\frac\pi 2 -\varphi}^{\frac\pi 2
    +\varphi}\int_0^r \sin\theta \rho d\rho d\theta\\
    &\displaystyle=-X_2(O) r^2
    \sin\varphi=- 2\lambda %\frac{2\l}
    X_2(O)
    \sin\varphi
\end{array}
\end{equation}

Finally consider the curvature terms at the endpoints $P$ and $Q$.
We have respectively
\begin{equation} \label{thefourth}
   \la H_\Sigma(P),X\ra   =X_2(P),\quad
   \la H_\Sigma(Q),X\ra =X_2(Q).
\end{equation}
For the point $O$, we have
\[
     H_\Sigma(O)= -2\cos\alpha\,\delta_O\mathbf{e}_2,
\]
yielding
\begin{equation}\label{thefifth}
    %\begin{equation}\label{6}
    \la H_\Sigma(O),X\ra=-2\sin\varphi X_2(O).
%\end{equation}
\end{equation}

Since $\Omega=A\cup B\cup C\cup D\cup E\cup F\cup G$ and
\[\begin{array}{rl}\vspace{6pt}\displaystyle
 \la H_{\Sigma},X\ra&\displaystyle=  \la H_{\Sigma}(\widetilde{QO}),X\ra+ \la H_{\Sigma}(\widetilde{PO}),X\ra+ \la
 H_{\Sigma}(P),X\ra\\&\quad\displaystyle+ \la H_{\Sigma}(O),X\ra+ \la
 H_{\Sigma}(Q),X\ra,
 \end{array}
\]
combining~(\ref{thefirst}), ~(\ref{thesecond}), ~(\ref{3}),
~(\ref{thethird}), ~(\ref{2}), ~(\ref{thefourth})
and~(\ref{thefifth}) we see that the Euler equation~(\ref{main})
is identically satisfied.
\end{proof}

\begin{figure}
\includegraphics[width=12cm]{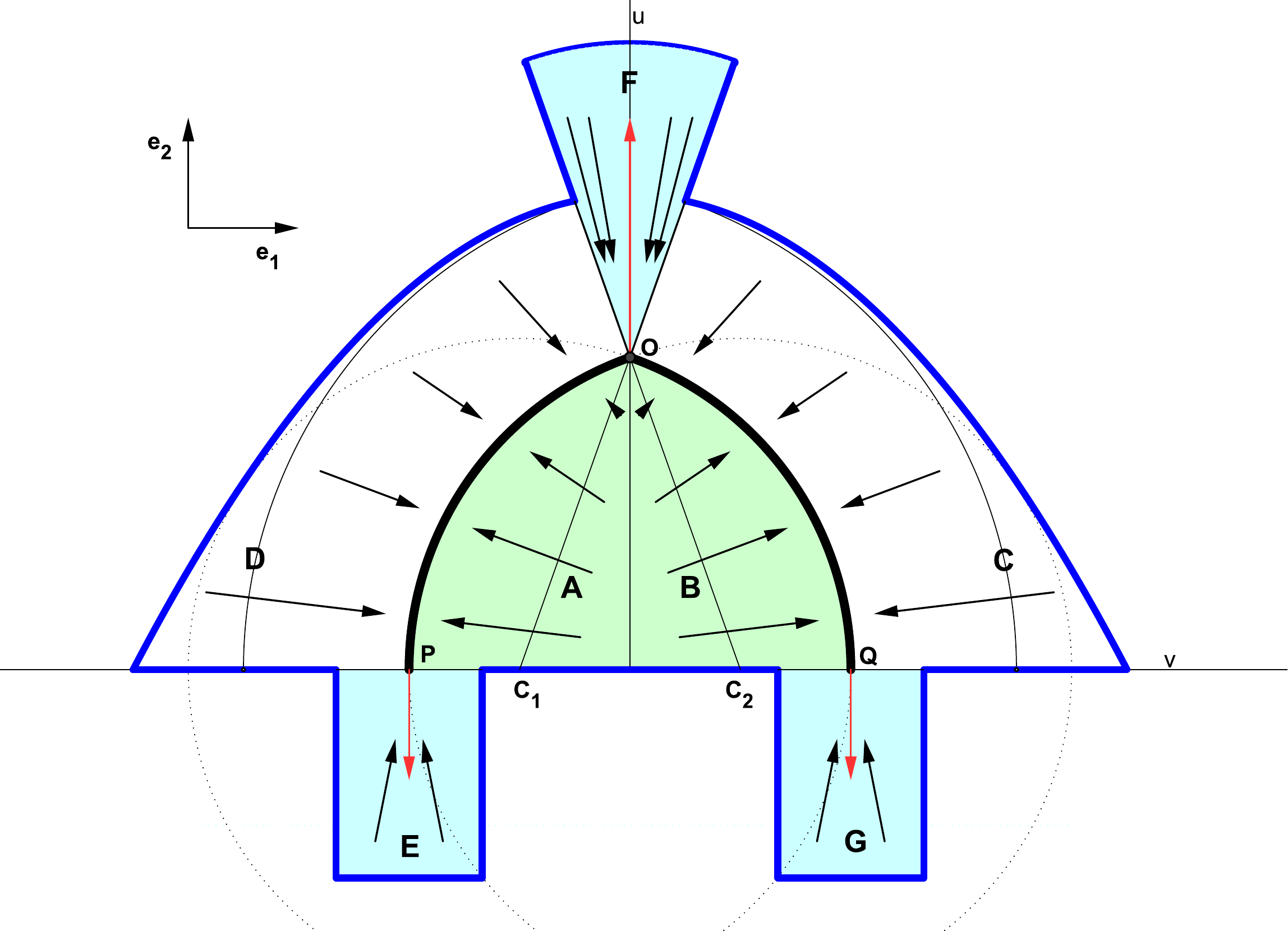}\caption{Construction of the proof of Proposition~\ref{prop_stazcorner}}
\label{Figura 3}
%\footnotesize{Figure 3}
\end{figure}

Next we will show that, for a convex domain $\Om$, if the
amplitude of the corner is not too large, then a set composed of
two arcs of circle is not stationary.

We first introduce the notation similar to that used in the proof
of Proposition~\ref{prop_stazcorner}, but for a generic curved
corner $\Sigma$ made by two arcs $\widetilde{QO}$ and
$\widetilde{PO}$ of circumferences with different radii $R_1$ and
$R_2$ and with centers $C_1$ and $C_2$ respectively. Again
$2\varphi \in[0,\pi]$ is the angle between the normals in $O$
which bound the set of points (we will call the bisector ray of
the latter angle $u$) in $\R^2$ having $O$ as the unique point of
minimum distance to $\Sigma$. Let $v'$ be a ray starting at $C_1$
forming the angle $\alpha\leq \pi/2-\varphi$ with the ray $C_1 O$.
We assume that $\alpha$ is sufficiently small so that $v'$ meets
$\widetilde{QO}$ in some point $M$. In this way the rays $v'$,
$C_1 O$ and the arc $\widetilde{QO}$ form a sector of area $\alpha
R_1^2/2$. We also assume the unit coordinate vectors
$\mathbf{e}_1$ and $\mathbf{e}_2$ to be directed as in
Figures~\ref{Figura 4} and~\ref{Figura 5}.

Fix $\alpha$ small enough such that $v'$ meets the continuation of
$u$. Consider the ridge set $\mathcal{R}_\Sigma$ of $\Sigma$
 (i.e.\ the set of points of equal
distance from the two arcs). Note that there exists a segment
$\overline{OZ}$ with $Z\in v'$ which intersects
$\mathcal{R}_\Sigma$
only in $O$ (this %is obvious when the curved corner is not convex,
%see, e.g.\ Figure~\ref{Figura 4}, while if it is convex, then this
assertion is implied by the fact that $\mathcal{R}_\Sigma$
 is a regular
curve tangent in $O$ to $u$). Denote by $W$ the curvilinear
triangle bounded by $\widetilde{MO}$, $\overline{ZO}$ and
$\overline{ZM}$. Clearly it contains the set of points $T$ having
the projection to $\Sigma$ on $\widetilde{MO}$. Moreover, they are
all projected to $\widetilde{MO}$ from the same side (i.e.\ either
from outside of the circle $B_{R_1}(C_1)$ as in Figure~\ref{Figura
4}, or from the inner part of the circle $B_{R_1}(C_1)$ as in
Figure~\ref{Figura 5}). It is important to observe that there are
no points with such a property outside of $W$. We denote by $C$
the set of points having the projection to $\Sigma$ on
$\widetilde{MO}$ but from the different side with respect to $T$.

In this section we will consider a vector field $X$ supported in a
small neighborhood of a subset of $\widetilde{MO}$ (in polar
coordinates with respect to $C_1$ and $v'$, the points of the
support are contained in the set with angular coordinate
$\theta\in[\theta_0,\alpha]$). We assume that $X$ be vanishing in
$O$ and have restriction to $\widetilde{MO}$ directed towards the
outward normal $\nu$ to the circle $B_{R_1}(C_1)$. Thus in the
first member of~(\ref{main}) the only nonzero terms are the
integrals in the regions $T$ and $C$ and the curvature term
restricted to $\widetilde{MO}$.

\begin{proposition}\label{prop_lemma}
A non convex curved corner is not stationary, for any $\Om\subset
\R^2$.
\end{proposition}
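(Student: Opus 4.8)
The plan is to test the optimality condition~(\ref{main}) against the localized field $X$ prepared above: supported on a small subarc of $\widetilde{MO}$ bounded away from $O$, vanishing at $O$, and equal to $|X|\,\nu$ with $\nu$ the outward normal to $B_{R_1}(C_1)$. With such an $X$ the right member of~(\ref{main}) reduces to the curvature carried by $\widetilde{MO}$ and the left member to the two integrals over $T$ and $C$, every other projection region receiving no contribution. I would then show that for a suitable such $X$ the left member is \emph{strictly larger} than the right member, and that this strict inequality persists for \emph{every} admissible $\Om$; since~(\ref{main}) demands equality, this proves that no $\Om$ makes $\Sigma$ stationary.

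First I would record the curvature term. Working in polar coordinates $(\rho,\theta)$ centred at $C_1$, with $\theta\in[0,\alpha]$ measured from the ray $C_1O$ (so that $O$ is at $\theta=0$), the arc $\widetilde{MO}$ has constant curvature $1/R_1$ with curvature vector pointing towards $C_1$, i.e.\ opposite to $\nu$. Since $X=|X|\,\nu$ vanishes at both ends of its support, an integration by parts gives $\lambda\la H_\Sigma(\widetilde{MO}),X\ra=-\lambda\int|X(\theta)|\,d\theta$, a strictly negative quantity.

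The decisive point, and the step I expect to be the main obstacle, is the determination of the signs of the two integrals, which is exactly where the non-convexity of $\Sigma$ enters. For a point $x$ whose projection $\pi^\Sigma(x)$ lies on $\widetilde{MO}$ one has $\frac{\pi^\Sigma(x)-x}{|\pi^\Sigma(x)-x|}=\nu(\pi^\Sigma(x))$ when $x$ is inside the disk $B_{R_1}(C_1)$ and $=-\nu(\pi^\Sigma(x))$ when $x$ is outside it; hence, as $X=|X|\,\nu$ on $\widetilde{MO}$, the integrand equals $+|X(\pi^\Sigma(x))|$ on the inner side and $-|X(\pi^\Sigma(x))|$ on the outer side. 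I would prove that, because $\Sigma$ is non-convex, the bounded region $T\subset W$ lies \emph{outside} $B_{R_1}(C_1)$ while $C$ lies \emph{inside}: the non-convexity forces a branch of the ridge $\mathcal{R}_\Sigma$ onto the outer side of $\widetilde{MO}$, so that the set of points projecting onto $\widetilde{MO}$ from outside the disk is cut off by $\mathcal{R}_\Sigma$ and stays bounded (the configuration of Figure~\ref{Figura 4}), whereas for a convex corner the opposite happens (Figure~\ref{Figura 5}), consistently with the stationary example of Proposition~\ref{prop_stazcorner}. Granting this, the integral over $C$ is non-negative and the one over $T$ is non-positive.

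It then remains to estimate the integral over $T$. Writing $g(\theta)$ for the distance from $C_1$ to $\mathcal{R}_\Sigma$ along the ray of angle $\theta$, integration in $\rho$ from $R_1$ to $g(\theta)$ gives $\int_T\la X(\pi^\Sigma(x)),\frac{\pi^\Sigma(x)-x}{|\pi^\Sigma(x)-x|}\ra\,dx=-\int|X(\theta)|\,\frac{1}{2}\big(g(\theta)^2-R_1^2\big)\,d\theta$. Since $O\in\mathcal{R}_\Sigma$ is at distance $R_1$ from $C_1$, continuity gives $g(\theta)\to R_1$ as $\theta\to0$; thus, choosing the support of $X$ close enough to $O$ (for instance taking $\alpha$ small), I can force $\frac{1}{2}\big(g(\theta)^2-R_1^2\big)<\lambda$ throughout the support, whence the integral over $T$ exceeds $-\lambda\int|X(\theta)|\,d\theta$. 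Passing from $T$ to $T\cap\Om$ only increases this integral, its integrand being non-positive, and adding the non-negative contribution of $C\cap\Om$ preserves the inequality; therefore the left member of~(\ref{main}) strictly exceeds the right member for this $X$ and for every $\Om$, which is the desired contradiction. The only genuinely delicate ingredient is the side determination of the previous paragraph; the rest is the elementary polar computation together with the estimate $g(\theta)-R_1=o(1)$ at the pinch point $O$.
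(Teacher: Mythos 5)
Your proposal is correct and follows essentially the same route as the paper's own proof: the same normal field supported on a subarc of $\widetilde{MO}$ away from $O$, the same sign determination (non-convexity places $T$ outside $B_{R_1}(C_1)$, making its integrand $\leq 0$ while that of $C$ is $\geq 0$), and the same pinching estimate showing the $T$-contribution is negligible against the curvature term $\lambda\int|X(\theta)|\,d\theta$, so that the left side of the Euler equation strictly exceeds the right side for any $\Om$. The only cosmetic difference is that you bound $T$ radially by the ridge graph $g(\theta)$ and use $g(\theta)\to R_1$ near $O$, whereas the paper encloses $T$ in the curvilinear triangle $W$ (choosing $Z$ with $\beta\in(\pi/2,\pi)$) and uses the explicit bound $R_1\sin\beta/\sin(\beta+\alpha-\theta)\to R_1$ as $\theta\to\alpha$; these are the same estimate.
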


\begin{proof}
Let $\Sigma$ be a non convex curved corner. In this case one of
the centers belongs to one of the rays bounding the cone %$F$
of points for which the projection to $\Sigma$ coincides with $O$
(let it be $C_1$). So the region $C$ is inside the sector bounded
by the arc $\widetilde{QO}$ (see Figure~\ref{Figura 4}).
 If $\beta$ is the angle formed by $\overline{OZ}$ and
$\overline{OC_1}$, it is easily seen that one can choose the point
$Z$ so that $\beta\in (\pi/2,\pi)$. In polar coordinates with
respect to $C_1$ and $v'$ for small $\alpha$ one has then
\begin{equation}\label{polarinside2}
W=\left\{(\rho,\theta)\colon 0<\theta<\alpha ,
R_1<\rho<\frac{R_1\sin\beta}{\sin(\beta+\alpha-\theta)}\right\}
\end{equation}
(observe that $\sin\beta/\sin(\beta+\alpha-\theta) >1$ since
$\beta\in (\pi/2, \pi)$, and $\alpha-\theta>0$ is small enough).
 We obtain also
 \[
%\begin{equation}\label{curv0}
   - \lambda\la H_\Sigma,X\ra={\lambda} \int_{\theta_0}^\alpha
     \la X(\theta),\nu(\theta)\ra d\theta={\lambda} \int_{\theta_0}^\alpha
    |X(\theta)|d\theta.
%\end{equation}
\]
Moreover, thanks to~(\ref{polarinside2}), we have
\[
\begin{array}{l}\vspace{6pt}\displaystyle
\left|\int_T \left\la
X(\pi^\Sigma(x)),\frac{\pi^\Sigma(x)-x}{|\pi^\Sigma(x)-x|}\right\ra
    \,dx \right| \displaystyle\leq  \int_W \left|\left\la
X(\pi^\Sigma(x)),\frac{\pi^\Sigma(x)-x}{|\pi^\Sigma(x)-x|}\right\ra\right|
    \,dx \\
    \qquad\qquad\qquad\qquad\qquad\displaystyle=\frac12\int_{\theta_0}^\alpha {|X(\theta)|} \left(\frac{R_1^2\sin^2\beta}{\sin^2(\beta+\alpha-\theta)}-R_1^2\right)d
    \theta.
\end{array}
\]
But for $\theta\to \alpha$,  with $\beta$ fixed, we get
\[
\frac{R_1^2\sin^2\beta}{\sin^2(\beta+\alpha-\theta)}-R_1^2=o(1),
\]
implying that
\[
%\begin{equation}\label{interior0}
\left|\int_T \left\la
X(\pi^\Sigma(x)),\frac{\pi^\Sigma(x)-x}{|\pi^\Sigma(x)-x|}\right\ra
    \,dx\right|\leq o\left(\int_{\theta_0}^\alpha |X(\theta)|\, d\theta\right).
%\end{equation}
\]
Therefore it is clear that, for $\theta_0$ close enough to $\alpha$,
the Euler equation~(\ref{main}) is never satisfied for $\Sigma$. In
fact, since
\[
\int_C \left\la
X(\pi^\Sigma(x)),\frac{\pi^\Sigma(x)-x}{|\pi^\Sigma(x)-x|}\right\ra
\geq 0,
\]
 we have that
\[ \begin{array}{l}\vspace{6pt}\displaystyle
\int_\Om \left\la
X(\pi^\Sigma(x)),\frac{\pi^\Sigma(x)-x}{|\pi^\Sigma(x)-x|}\right\ra
-\lambda \la H_\Sigma, X\ra \\
\qquad\qquad\qquad\qquad\qquad \geq \displaystyle \int_T \left\la
X(\pi^\Sigma(x)),\frac{\pi^\Sigma(x)-x}{|\pi^\Sigma(x)-x|}\right\ra%\\
%\qquad\qquad\qquad\qquad\qquad\displaystyle 
-\lambda \la H_\Sigma, X\ra.
\end{array} \]
Hence, for $\theta_0\to \alpha$ one has
\[ \begin{array}{rl}\vspace{6pt}\displaystyle
\int_\Om \left\la
X(\pi^\Sigma(x)),\frac{\pi^\Sigma(x)-x}{|\pi^\Sigma(x)-x|}\right\ra
-\lambda \la H_\Sigma, X\ra \geq &\displaystyle \lambda \int_{\theta_0}^\alpha
|X(\theta)|\, d\theta \\
& \displaystyle- o\left(\int_{\theta_0}^\alpha |X(\theta)|\, d\theta\right),
\end{array} \]
that is, the right hand side of the above inequality is always
strictly positive once $\theta_0$ is sufficiently close to
$\alpha$.
\end{proof}
\begin{figure}
\includegraphics[width=12cm]{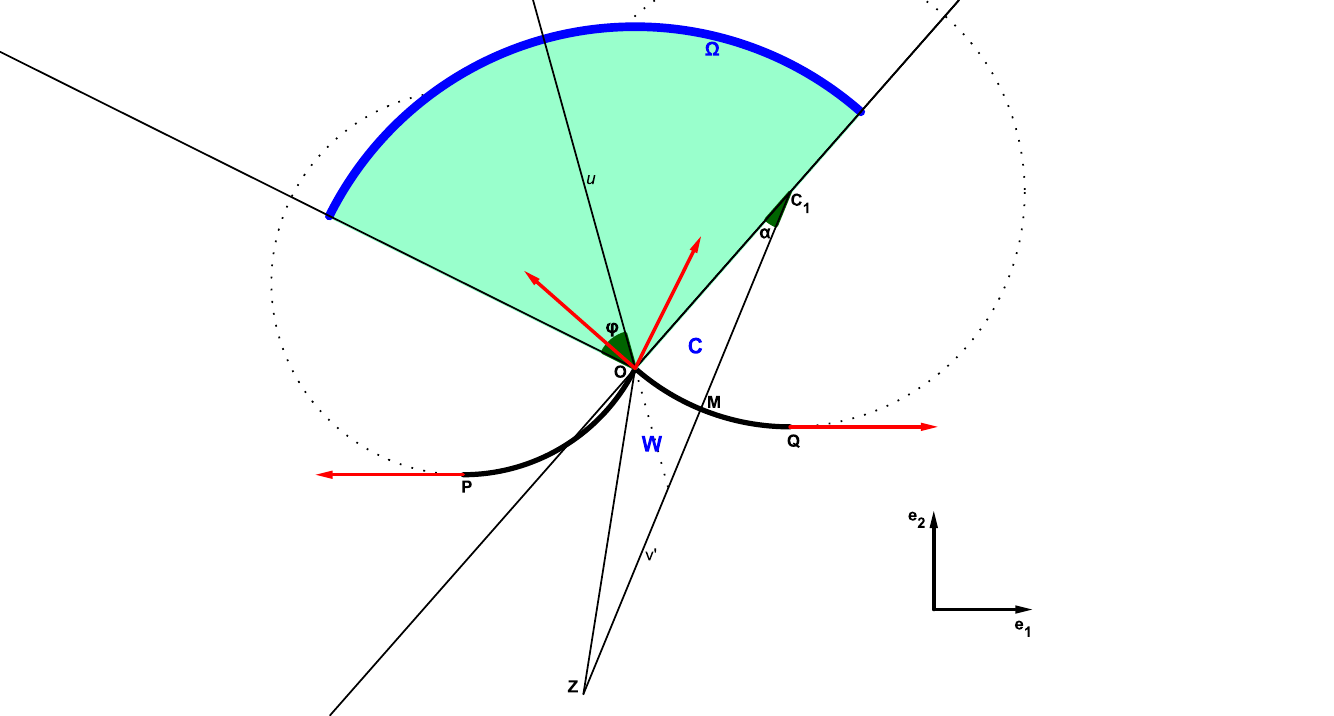}\caption{Construction of the proof of Proposition~\ref{prop_lemma}}\label{Figura 4}
%\footnotesize{Figure 4}
\end{figure}

Finally, we show that the condition for a curved corner to be
stationary with respect to a \emph{convex} $\Om$ is even more
restrictive.

\begin{proposition}\label{prop_43} Let $\Om$ be convex.
 A curved corner is
not stationary with respect to $\Om$ if
\begin{equation}\label{inequality}
\frac{4\lambda}{
    b_1^2+b_2^2}\geq h(\varphi),
\end{equation}
where $b_i\defeq \sqrt{R_i^2+2\lambda}-R_i$, $i=1,2$,
\[
h(\varphi)\defeq \frac{1}{\sin \varphi}\int_0^\varphi
\frac{\cos(\varphi-\theta)}{\cos^2 \theta}d\theta.
\]  In particular, there are no curved corners of amplitude less
than or equal to $2\gamma$, where $\gamma\in (0,\pi/2)$ is the
angle that satisfies
$$
\int_0^\gamma \frac{\cos(\gamma-\theta)}{\cos^2
\theta}d\theta=\sin\gamma,
$$
so $\gamma\simeq 54^\circ$.
\end{proposition}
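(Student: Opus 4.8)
The plan is to test the stationarity identity (\ref{main}) with normal fields concentrated near the vertex $O$, and to use the convexity of $\Om$ to produce a region, forced to lie in $\Om$, whose contribution to the integral term of (\ref{main}) cannot be matched by the curvature term once (\ref{inequality}) holds. I would first extract a necessary condition from the arcs alone, reusing the computation in the proof of Proposition~\ref{prop_stazcorner}. Testing (\ref{main}) with a field $X=g(\theta)\nu(\theta)$ supported in the interior of one arc, say $\widetilde{QO}$, and using that the region $T$ (projecting from outside the circle $B_{R_1}(C_1)$) and the region $C$ (from inside) give contributions of opposite sign while the nonatomic curvature gives $-\lambda\int g\,d\theta$, one sees that stationarity forces $\Om$ to contain the \emph{outer strip} along each arc out to radius $R_i+b_i$: this is exactly the thickness for which $\tfrac12\big((R_i+b_i)^2-R_i^2\big)=\lambda$, which is the role of $b_i=\sqrt{R_i^2+2\lambda}-R_i$.

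The decisive step is geometric. Since $\Om$ is convex and must contain both outer strips, it contains their convex hull; near $O$ the outer boundary of the strip along $\widetilde{QO}$ is tangent to the line perpendicular to $\nu_1(O)$ at distance $b_1$ from $O$, and similarly for $\widetilde{PO}$ with $b_2$. Hence $\Om$ contains the region $F'$ cut out of the cone of points projecting to $O$ (the cone of half-aperture $\varphi$ about the bisector $u$) by these two tangent lines. Writing the integral term over $F'$ in polar coordinates centred at $O$, with $\tfrac{\pi^\Sigma(x)-x}{|\pi^\Sigma(x)-x|}=-\hat r$ and the polar angle measured from each normal, the arc-$1$ half contributes $-\tfrac{b_1^2}{2}\int_0^\varphi\tfrac{\cos(\varphi-\theta)}{\cos^2\theta}\,d\theta$ and symmetrically for arc $2$, so that the forced contribution of $F'$ equals $-\tfrac{b_1^2+b_2^2}{2}\,\sin\varphi\,h(\varphi)$.

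It remains to compare this with the curvature. Because the outer strips together with the nonatomic curvature of the two arcs balance identically, as in (\ref{thefirst})--(\ref{thesecond}), the only curvature left over near $O$ is the atomic term $\lambda\la H_\Sigma(O),X\ra=-2\lambda\sin\varphi\,X_2(O)$, exactly the value produced by the vertex sector $F$ of radius $\sqrt{2\lambda}$ in Proposition~\ref{prop_stazcorner}. Testing (\ref{main}) with $X=\mathbf{e}_2$ near $O$, stationarity would require the contribution of the whole region projecting to $O$ to equal this value; but convexity forces that contribution to be at least, in modulus, the contribution $\tfrac{b_1^2+b_2^2}{2}\sin\varphi\,h(\varphi)$ of $F'$. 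Matching the two demands yields precisely the threshold (\ref{inequality}): when $\tfrac{4\lambda}{b_1^2+b_2^2}\ge h(\varphi)$ the requirements become incompatible and (\ref{main}) cannot hold, so $\Sigma$ is not stationary. The main obstacle I expect here is the convex-geometric estimate itself — showing rigorously that $\Om$ must contain $F'$ and evaluating its contribution exactly to $h(\varphi)$ — together with the bookkeeping for unequal radii $R_1\neq R_2$, where one works with the asymmetric cone and the two tangent lines at distances $b_1$ and $b_2$.

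Finally, the ``in particular'' assertion follows by elementary analysis of $h$. Since $b_i=\sqrt{R_i^2+2\lambda}-R_i\le\sqrt{2\lambda}$, one has $b_1^2+b_2^2\le 4\lambda$, hence $\tfrac{4\lambda}{b_1^2+b_2^2}\ge 1$ for every choice of radii. Thus it is enough that $h(\varphi)\le 1$, i.e. $\varphi\le\gamma$ with $\gamma$ characterised by $h(\gamma)=1$, equivalently $\int_0^\gamma\tfrac{\cos(\gamma-\theta)}{\cos^2\theta}\,d\theta=\sin\gamma$; the behaviour of $h$ on $(0,\pi/2)$ then guarantees that every curved corner of amplitude $2\varphi\le 2\gamma$ satisfies (\ref{inequality}) for all radii, and is therefore never stationary with respect to a convex $\Om$.
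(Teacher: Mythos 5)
Your opening step is essentially the paper's: testing~(\ref{main}) with normal fields supported on one arc and letting the support shrink toward $O$ forces the first-order balance $\tfrac12\big((R_i+b_i)^2-R_i^2\big)=\lambda$, i.e.\ $b_i=\sqrt{R_i^2+2\lambda}-R_i$. Note, though, that what this really yields is an \emph{equality}: $\partial\Om$ must meet the ray $C_iO$ at distance exactly $b_i$ from $O$ (the paper's point $S$, equation~(\ref{newb})), together with the local information $g_1>0$; it is not merely the statement that $\Om$ contains the outer strips. The genuine gap is in your ``decisive step''. You assert that convexity forces $\Om$ to \emph{contain} the region $F'$ of the cone $(\pi^\Sigma)^{-1}(O)$ cut out by the two tangent lines, and you then use the resulting \emph{lower} bound on the modulus of the integral $I_O$ of $\big\la \hat X(\pi^\Sigma(x)),\tfrac{\pi^\Sigma(x)-x}{|\pi^\Sigma(x)-x|}\big\ra$ over $(\pi^\Sigma)^{-1}(O)\cap\Om$. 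This fails twice. Geometrically, convexity plus containment of the strips only gives that $\Om$ contains the convex hull of the strips, and near $O$ that hull meets the cone only up to the chord joining the two strip corners at distances $b_1,b_2$ from $O$ along the normals; it never reaches the kite bounded by the tangent lines, so $F'\subset\Om$ is unjustified (and false in general). Logically, even granting it, a lower bound cannot contradict~(\ref{main}) under~(\ref{inequality}): the Euler equation demands $|I_O|=2\lambda\sin\varphi\,|\hat X(O)|+c_\Gamma$ with $c_\Gamma\to0$, and your bound $|I_O|\ge\tfrac{b_1^2+b_2^2}{2}h(\varphi)\sin\varphi\,|\hat X(O)|$ is incompatible with this demand precisely when $\tfrac{4\lambda}{b_1^2+b_2^2}<h(\varphi)$, i.e.\ under the \emph{reverse} of~(\ref{inequality}); when~(\ref{inequality}) holds, your two conditions are perfectly compatible and no contradiction arises. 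As written, your argument would (at best) prove the complementary statement.

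The paper's argument runs in the opposite direction, and this is the missing idea. Since $S\in\partial\Om$ sits at distance exactly $b_1$ from $O$, and since $\partial\Om$ near $S$ on the arc side lies outside the circle $\sigma$ of radius $R_1+b_1$ centred at $C_1$ (this is where $g_1>0$ is used), any support line to the convex set $\Om$ at $S$ must cut the cone no farther from $O$ than the tangent to $\sigma$ at $S$. Hence one gets the \emph{upper} inclusion $(\pi^\Sigma)^{-1}(O)\cap\Om\subset V_1\cup V_2$ (the paper's~(\ref{eq_estproj0})), and therefore the upper bound $|I_O|<\tfrac{b_1^2+b_2^2}{2}h(\varphi)\sin\varphi\,|\hat X(O)|$ via~(\ref{exteriortriangle}) and~(\ref{exteriortriangle1}). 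Combined with the curvature demand $|I_O|=2\lambda\sin\varphi\,|\hat X(O)|+c_\Gamma$, stationarity forces $\tfrac{4\lambda}{b_1^2+b_2^2}<h(\varphi)$, so~(\ref{inequality}) excludes it --- which is the statement. Your final ``in particular'' paragraph ($b_1^2+b_2^2\le4\lambda$, so $h(\varphi)\le1$ suffices) coincides with the paper's and is fine once the main inequality is obtained in the correct direction.
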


\begin{proof} If the curved corner is not convex, we refer to the
previous Proposition~\ref{prop_lemma}. Otherwise, let $\beta$ be
the angle between $\overline{OZ}$ and $\overline{OC_1}$. This time
$\beta<\varphi$, so that once $\alpha$ is sufficiently small, one
has $\beta+\alpha<\pi/2$ (see Figure~\ref{Figura 5}). In polar
coordinates with respect to $C_1$ and $v'$, we have
\begin{equation}\label{polarinside}
W=\left\{(\rho,\theta)\colon 0<\theta<\alpha ,
\frac{R_1\sin\beta}{\sin(\beta+\alpha-\theta)}<\rho<R_1\right\}.
\end{equation}
Notice that $\alpha-\theta>0$ is small  and the bound on $\beta$
gives $\sin\beta/\sin(\beta+\alpha-\theta) <1$. The curvature term
in the Euler equation~(\ref{main}) is given by
\begin{equation}\label{curv}
    -\lambda\la H_\Sigma,X\ra={\lambda} \int_{\theta_0}^\alpha
    \la X(\theta),\nu(\theta)\ra d\theta=\lambda \int_{\theta_0}^\alpha
    |X(\theta)|\,d\theta.
\end{equation}
Using~(\ref{polarinside}), we get
\[ \begin{array}{l}\vspace{6pt}\displaystyle
   \int_T \left\la
X(\pi^\Sigma(x)),\frac{\pi^\Sigma(x)-x}{|\pi^\Sigma(x)-x|}\right\ra
    \,dx \displaystyle \leq  \int_W \left\la
X(\pi^\Sigma(x)),\frac{\pi^\Sigma(x)-x}{|\pi^\Sigma(x)-x|}\right\ra
    \,dx \\
    \qquad\qquad\qquad\qquad\qquad\displaystyle=\int_{\theta_0}^\alpha \frac{|X(\theta)|}2 \left(R_1^2-\frac{R_1^2\sin^2\beta}{\sin^2(\beta+\alpha-\theta)}\right)d
    \theta,
\end{array} \]
and again for $\theta\to \alpha$,  with $\beta$ fixed, we have
\[
R_1^2-\frac{R_1^2\sin^2\beta}{\sin^2(\beta+\alpha-\theta)}=o(1),
\]
and hence
\begin{equation}\label{interior}
\int_T \left\la
X(\pi^\Sigma(x)),\frac{\pi^\Sigma(x)-x}{|\pi^\Sigma(x)-x|}\right\ra
    \,dx\leq o\left(\int_{\theta_0}^\alpha |X(\theta)| d\theta\right).
\end{equation}
%If $\theta_0$ is sufficiently small, then
Notice that
 $C$ contains a region
formed by $v'$, the ray $C_1 O$, the arc $\widetilde{MO}$ and some
arc concentric to $\widetilde{MO}$ but of bigger radius. We
express the subset of the boundary of $\Om$ bounding $C$ in polar
coordinates $(\rho,\theta)$ with respect to $C_1$ and $v'$ by the
equation $\rho=b_1+R_1+g_1(\theta)$, where $g_1(\theta)\to 0$ as
$\theta\to \alpha$, and $b_1$ is the distance between $O$ and the
intersection between $\partial\Om$ and the ray $C_1 O$, which we
denote by $S$. Then
\begin{equation}\label{exterior}
\begin{array}{l}\vspace{6pt}\displaystyle
   \int_C \Big\la
X(\pi^\Sigma(x)), \displaystyle\frac{\pi^\Sigma(x)-x}{|\pi^\Sigma(x)-x|}\Big\ra
    \,dx =  -\int_{\theta_0}^\alpha\int_{R_1}^{R_1+b_1+g_1(\theta)}|X(\theta)|\rho
    d\rho d\theta\\\vspace{6pt}
    \qquad\displaystyle=-\frac12\int_{\theta_0}^\alpha
    |X(\theta)|\left(2R_1b_1+b_1^2+g_1(\theta)^2+2(R_1+b_1)g_1(\theta)\right)d\theta\\\vspace{6pt}
    \qquad\displaystyle=
    -\frac12 (2R_1b_1+b_1^2)\int_{\theta_0}^\alpha |X(\theta)|d\theta\\
    \qquad\displaystyle\qquad -\frac12\int_{\theta_0}^\alpha
    |X(\theta)|\left(g_1(\theta)^2+2(R_1+b_1)g_1(\theta)\right)
    d\theta.
\end{array}
\end{equation}
Suppose now that the Euler equation~(\ref{main}) holds, that is,
\begin{equation}\label{eq_eul11}
    \begin{array}{l}\vspace{6pt}\displaystyle
\int_T \left\la
X(\pi^\Sigma(x)),\frac{\pi^\Sigma(x)-x}{|\pi^\Sigma(x)-x|}\right\ra
    \,dx  \displaystyle \\
    \qquad\qquad\qquad\qquad + \displaystyle \int_C \left\la
X(\pi^\Sigma(x)),\frac{\pi^\Sigma(x)-x}{|\pi^\Sigma(x)-x|}\right\ra
    \,dx  
    %\\
%    \qquad\qquad\qquad\qquad\qquad\displaystyle 
=\lambda \la H_\Sigma, X \ra.
\end{array}
\end{equation}
Combining~(\ref{curv}),~(\ref{interior}) and~(\ref{exterior}) in
the above relationship, by comparison of the first order terms
with respect to $\int_{\theta_0}^\alpha
    |X(\theta)|\,d\theta$ as $\theta_0\to \alpha$, we obtain that
\begin{equation}\label{newb}
    b_1=\sqrt{R_1^2+{2\lambda}}-R_1.
\end{equation}
Moreover, by this choice of $b_1$ we have
\[
    -\frac12 (2R_1b_1+b_1^2)\int_{\theta_0}^\alpha
    |X(\theta)|d\theta = \lambda \la H_\Sigma, X \ra.
\]

From~(\ref{eq_eul11}) and~(\ref{exterior}), we conclude that
\[ \begin{array}{rl}\vspace{6pt}\displaystyle
& \displaystyle   \int_T \left\la
X(\pi^\Sigma(x)),\frac{\pi^\Sigma(x)-x}{|\pi^\Sigma(x)-x|}\right\ra
    \,dx \\
   &\displaystyle\qquad\qquad  -\frac12\int_{\theta_0}^\alpha
    |X(\theta)|\left(g_1(\theta)^2+2(R_1+b_1)g_1(\theta)\right)
    d\theta=0;
\end{array} \]
but since
\[
\int_T \left\la
X(\pi^\Sigma(x)),\frac{\pi^\Sigma(x)-x}{|\pi^\Sigma(x)-x|}\right\ra
    \,dx  >0,
\]
it follows
\[
\int_{\theta_0}^\alpha
    |X(\theta)|\left(g_1(\theta)^2+2(R_1+b_1)g_1(\theta)\right)
    d\theta>0.
\]
Minding that $X$ has an arbitrary support in $[\theta_0,\alpha]$,
this means that
\[
g_1(\theta)^2+2(R_1+b_1)g_1(\theta)>0
\]
and implies $g_1(\theta)>0$ for all $\theta\in [\theta_0,\alpha]$ whenever $\alpha$ is small enough (since otherwise $g_1(\theta)<-R_1-b_1$ which contradicts the fact that $g$ should be vanishing as $\theta\to\alpha)$. Hence, the part of $\partial\Om$ corresponding to the angular coordinate $\theta\in[\theta_0,\alpha]$ is, for small $\alpha$, more distant from $C_1$ than the arc $\sigma$ of the circumference with center $C_1$ passing through $S$, thus satisfying the equation $\rho(\theta)=R_1+b_1$. Thanks to convexity of $\Om$ we have then that any ray starting in $S$, directed inside the cone of points
with projection to $\Sigma$ in $O$, and belonging to a support
line to $\partial\Om$ in $S$, forms an angle not greater than $\pi/2$ with
the segment $\overline{SO}$ (mind that the angle of $\pi/2$
corresponds to the case when the ray is tangent to $\sigma$). As a
consequence, the part of $\Omega$ which lies in the angle (of
value $\varphi$) bounded by $u$ and the ray $OS$, is contained in the triangle $V_1$, formed by $u$, $\overline{OS}$ and the tangent in $S$ to $\sigma$.

Now fix a new vector field  $\hat{X}$, compactly supported in a small neighborhood of $O$ and such that $\hat{X}(O)$ is directed along $u$.
%Consider the
%curvature term along the tangent in $O$ to $\widetilde{QO}$.
One has
\[
H_\Sigma(O)=\delta_O (\tau_Q+\tau_P),
\]
where $\tau_Q$ and $\tau_P$ are the unit vectors tangent in $O$ to
the arcs $\widetilde{PO}$ and $\widetilde{QO}$ respectively and
directed towards $P$ and $Q$ respectively. Since
\[
\la \hat{X}, \delta_O \tau_Q\ra=\la \hat{X}, \delta_O \tau_P\ra
=-|\hat{X}(O)|
  \sin\varphi,
  \]
  we get
\begin{equation}\label{curvcurv}
  -\lambda\la \hat{X}, H_\Sigma(O) %\delta_O \tau_Q
  \ra=2\lambda |\hat{X}(O)|
  \sin\varphi.
\end{equation}
Now compute the contribution given by triangle $V_1$ to the first
term of the Euler equation~(\ref{main}). For this purpose we use
polar coordinates with respect to $O$ and the ray $OS$, with
$\theta\in[0,\varphi]$. It is clear that
\begin{equation}\label{triangle}
    V=\left\{(\rho,\theta)\colon 0\leq\theta\leq
    \varphi,0<\rho\leq\frac{b}{\cos\theta}\right\}.
\end{equation}
Therefore,
\begin{equation}\label{exteriortriangle}
    \begin{array}{l}\vspace{6pt}\displaystyle
   \int_{V_1} \left\la
\hat{X}(\pi^\Sigma(x)),\frac{\pi^\Sigma(x)-x}{|\pi^\Sigma(x)-x|}\right\ra
    \,dx \\
    \qquad\qquad\qquad\qquad\displaystyle
    =  -|\hat{X}(O)|\int_0^\varphi \cos(\varphi-\theta)\int_0^
    {\frac{b_1}{\cos\theta}}\rho d\rho d\theta\\\vspace{6pt}
    \qquad\qquad\qquad\qquad\displaystyle=-\frac{1}2 b_1^2 |\hat{X}(O)|
    \int_0^\varphi \frac{\cos(\varphi-\theta)}{\cos^2 \theta}d\theta\\
    \qquad\qquad\qquad\qquad\displaystyle=-\frac12 b_1^2
 |\hat{X}(O)| h(\varphi)\sin\varphi.
\end{array}
\end{equation}
    Reasoning in the same way with arc $\widetilde{PO}$ instead of the arc
    $\widetilde{PQ}$, we obtain the analogous triangle $V_2$ with
\begin{equation}\label{exteriortriangle1}
    \begin{array}{rl}\displaystyle
   \int_{V_2} \left\la
\hat{X}(\pi^\Sigma(x)),\frac{\pi^\Sigma(x)-x}{|\pi^\Sigma(x)-x|}\right\ra
    \,dx &\displaystyle=  -\frac12 b_2^2
 |\hat{X}(O)| h(\varphi)\sin\varphi,
\end{array}
\end{equation}
with
%\begin{equation}\label{newb2}
    $b_2\defeq \sqrt{R_2^2+{2\lambda}}-R_2$.
%\end{equation}
But since $\Om$ is convex, and one of the sides of $V_1$ (resp.\
$V_2$) is in the support line to $\Om$, we have
\begin{equation}\label{eq_estproj0}
    (\pi^{\Sigma})^{-1}(O)\cap \Om \subset V_1\cup V_2.
\end{equation}
 Let us write the Euler equation~(\ref{main}) with respect to the vector field
   $\hat{X}$. Letting $\Gamma:=(\Sigma\backslash\{O\})\cap
   \mbox{supp}\,{\hat{X}}$,
   thanks to~(\ref{curvcurv})
we get
\[
%\begin{equation}\label{finaleuler0}
\begin{array}{rl}\displaystyle
 \int_{(\pi^{\Sigma})^{-1}(O)} \left\la
\hat{X}(\pi^\Sigma(x)),\frac{\pi^\Sigma(x)-x}{|\pi^\Sigma(x)-x|}\right\ra
    \,dx + 2\lambda |\hat{X}(O)|
  \sin\varphi + c_\Gamma =0,
\end{array}
%\end{equation}
\]
where by $c_\Gamma$ we denoted the sum of all the terms in the
Euler equation which involve integrals over $\Gamma$. Minding the
strict inclusion~(\ref{eq_estproj0}), and
using~(\ref{exteriortriangle}) and~(\ref{exteriortriangle1}), we
obtain
\begin{equation}\label{finaleuler}
\begin{array}{rl}\displaystyle
-\frac12 (b_1^2+b_2^2)
 |\hat{X}(O)| h(\varphi)\sin \varphi + 2\lambda |\hat{X}(O)|
  \sin\varphi + c_\Gamma < 0.
\end{array}
\end{equation}
 Since  $c_\Gamma$ contains only
integral terms, we have that $c_\Gamma$ can be made arbitrarily
small by choosing a sufficiently small support of $\hat X$, and
hence~(\ref{finaleuler}) may be satisfied, only if
    \begin{equation}\label{inequalityproof}
    \frac{4\lambda}{
    b_1^2+b_2^2}< h(\varphi),
    \end{equation}
    or, in other words, when $h(\varphi)$ is as in the
    the statement being proven, then the Euler equation is not
    satisfied. Finally, to prove the second claim, it remains
    to observe that $4\lambda/(
    b_1^2+b_2^2)> 1$, and hence with $h(\varphi) \leq 1$ the
    respective curved corner is not stationary.
\end{proof}

\begin{figure}
\includegraphics[height=10cm]{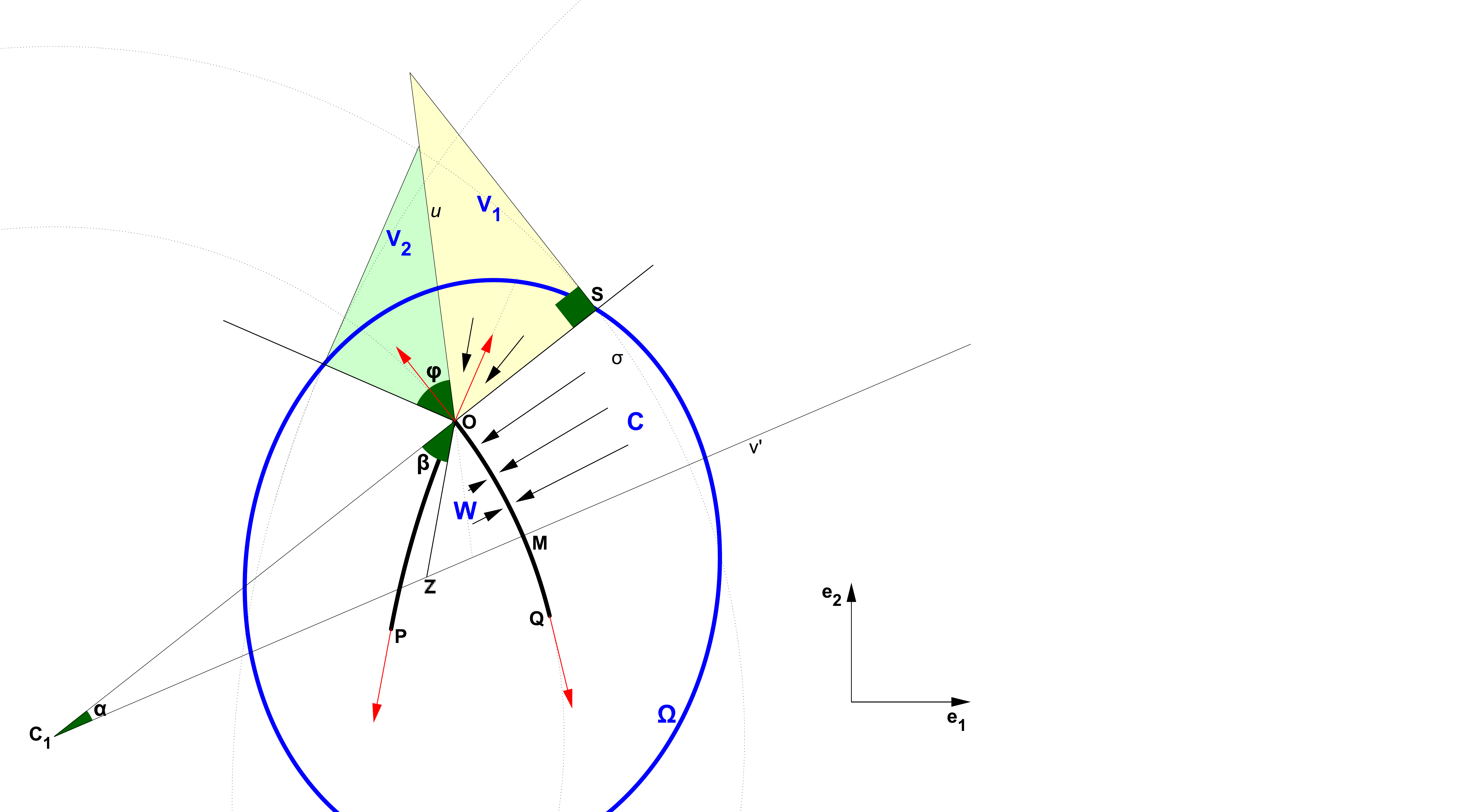}\caption{Construction of the proof of Proposition~\ref{prop_43}}\label{Figura 5}
%\footnotesize{Figure 5}
\end{figure}

%\begin{remark}%[\textbf {Sharpness of the estimate}]
%\textbf{Aggiustare! Aggiungere la dimostrazione precisa: \`{e}
%importante!} The estimate obtained in the last theorem is sharp.
%In fact, one can let $R$ be very small, so that ${2\lambda}/{
%    b^2}\simeq 1$. On the other hand, for small $R$, $g_1(\theta)$ is
%    also small, so that the tangent to $\partial\Om$ in $S$ (the actual limit for
%    convexity) is close to the tangent in $S$ to the arc itself,
%    with respect to which we have defined region ~(\ref{triangle} and calculated
%     the contribution in~(\ref{exteriortriangle}.
%\end{remark}

%\begin{corollary}
%By approximation, the results can be extended to corners made with
%two arcs of $C^{1,\alpha}$ curves.
%\end{corollary}

\section{The compliance case}\label{scompliance}

In this section we consider the case of a functional arising from the theory of elliptic equations:
\begin{equation}\label{ellpb}
\F(\Sigma)\defeq \int_\Omega
u_\Sigma(x)f(x)\,dx+\lambda\H(\Sigma).
\end{equation}
Here $\Omega\subset\R^2$ is a given bounded open subset, $f$ is a
given function, and $u_\Sigma$ is the unique solution of the PDE
\[
\left\{\begin{array}{ll}
-\Delta u=f& \hbox{ in }\Omega\setminus\Sigma,\\
u=0& \hbox{ on }\partial\Omega\cup\Sigma.
\end{array}\right.
\]
An integration by parts in the PDE above gives that the {\it
compliance} term $\int_\Omega u_\Sigma f\,dx$ appearing in the
functional $\F$ can be expressed in an equivalent way:
\[
\int_\Omega
u_\Sigma(x)f(x)\,dx=\max\Big\{\int_\Omega\big(2f(x)u-|\nabla
u|^2\big)\,dx\ :\ u\in W^{1,2}_0(\Omega\setminus\Sigma)\Big\}.
\]
 For
simplicity we assume that $f\in W^{1,2}(\R^2)$ and that $\Omega$ has a Lipschitz boundary. In fact, we could also consider the
case of a $p$-Laplace operator, and the similarity with the
average distance functional consists in the fact (shown
in Buttazzo, Santambrogio, 2007) that as $p\to+\infty$ the $p$-compliance
problem converges to the one with the average distance functional.
Here we limit ourselves to the case $p=2$. Also for simplicity we
have taken the Dirichlet condition $u=0$ on $\partial\Omega$; all
the arguments can be repeated for the Neumann case $\frac{\partial
u}{\partial\nu}=0$ on $\partial\Omega$.

The existence of a solution to the minimum problem
$$\min\big\{\F(\Sigma)\ :\ \Sigma\hbox{ closed connected subset of }\Omega\big\}$$
follows by an application of the $\check{\mbox{S}}$ver\'ak
compactness theorem (see Buttazzo, Santambrogio, 2007). Here we are
interested, as before, in the first order necessary conditions of
optimality.

Following Theorem 5.3.2 of Henrot, Pierre (2005), if $\phi_\eps$ is a one
parameter group of diffeomorphisms satisfying~(\ref{generator}),
setting $\Sigma_\eps\defeq \phi_\eps(\Sigma)$, $u\defeq u_\Sigma$
and $u_\eps=u_{\Sigma_\eps}$, we have as $\eps\to 0$ that
$\frac{u_\eps-u}{\eps}\to u'$ in $L^2(\Omega)$, where $u'$ satisfies
the PDE
\[
\left\{\begin{array}{ll}\displaystyle
-\Delta u'=0\hbox{ in }\Omega\setminus\Sigma,\\\displaystyle
u'=0\hbox{ on }\partial\Omega,\ u'=-\nabla u\cdot X\hbox{ on }\Sigma.
\end{array}\right.
\]
Note that the boundary conditions in the above equation are
understood in the weak sense, i.e.\ $u'+\nabla u\cdot X\in
W_0^{1,2}(\R^2)$.
 Therefore, the first variation
argument applied to the functional $\F$ gives
\[
\frac{\partial}{\partial\eps}
\F(\Sigma_\eps){\Big|}_{\eps=0}=\int_\Omega u'f\,dx-\lambda\la
H_\Sigma,X\ra.
\]
Suppose now that $\Omega=\Omega^+\cup \Omega^-$ with $\Sigma\subset
\partial\Omega^+\cap \partial\Omega^-$. Then, if $\Sigma$,
$\partial \Omega$ and $f$ provide sufficient regularity for $u$
and $u'$ so that the Green formula can be applied, we have
\[ \begin{array}{rl}\vspace{6pt}\displaystyle
    \int_{\Omega^+}
u'f\,dx  =- \int_{\Omega^+} u'\Delta u\,dx &\displaystyle= \int_{\Omega^+} \nabla
u'\nabla u\,dx - \int_{\partial \Omega^+} u'\frac{\partial
u}{\partial n}\, d\H \\\vspace{6pt}
&\displaystyle=\int_{\Omega^+} \nabla u'\nabla u\,dx + \int_\Sigma \nabla u
\cdot X \frac{\partial u}{\partial n}\, d\H \\
&\displaystyle\qquad\qquad - \int_{\partial \Omega^+ \setminus (\partial
\Omega\cup \Sigma)} u'\frac{\partial u}{\partial n}\, d\H,
\end{array} \]
where $n$ stands for the external normal to $\Omega^+$.
 But
\[ \begin{array}{rl}\vspace{6pt}\displaystyle
\int_{\Omega^+} \nabla u'\nabla u\,dx &\displaystyle =- \int_{\Omega^+} u
\Delta u'\,dx + \int_{\partial \Omega^+} u'\frac{\partial
u}{\partial n}\, d\H \\
& \displaystyle=
 -\int_{\partial \Omega^+ \setminus (\partial \Omega\cup
\Sigma)} u\frac{\partial u'}{\partial n}\, d\H.
\end{array} \]
Thus,
\begin{equation}\label{eq_iom+}
\begin{array}{rl}\vspace{6pt}\displaystyle
\int_{\Omega^+} u'f\,dx  =  & \displaystyle\int_\Sigma \nabla u^+ \cdot X
\frac{\partial u^+}{\partial n}\, d\H \\
& \displaystyle -
 \int_{\partial \Omega^+ \setminus (\partial \Omega\cup
\Sigma)} \left( u\frac{\partial u'}{\partial n}+u'\frac{\partial
u}{\partial n}\right)\, d\H,
\end{array} 
\end{equation}
where $\nabla u^+$ stands for the trace on $\Sigma$ of the gradient
of $u$ restricted to $\Omega^+$, and $\frac{\partial u^+}{\partial
n}$ stands for the trace of the respective normal derivative.
 Analogously, minding that the external normal to $\Omega^-$
over $\partial\Omega^+\cap \partial\Omega^-$ is given by $-n$, we
get
\begin{equation}\label{eq_iom-}
\begin{array}{rl}
\displaystyle \int_{\Omega^-} u'f\,dx  =  & \displaystyle -\int_\Sigma \nabla u^- \cdot X
\frac{\partial u^-}{\partial n}\, d\H \\
& \displaystyle +
 \int_{\partial \Omega^+ \setminus (\partial \Omega\cup
\Sigma)} \left( u\frac{\partial u'}{\partial n}+u'\frac{\partial
u}{\partial n}\right)\, d\H,
\end{array}
\end{equation}
where $\nabla u^-$ stands for the trace on $\Sigma$ of the
gradient of $u$ restricted to $\Omega^-$, and $\frac{\partial
u^-}{\partial n}$ stands for the trace of the respective normal
derivative. From~(\ref{eq_iom+}) and~(\ref{eq_iom-}) we obtain
\[
\int_{\Omega} u'f\,dx  =    \int_\Sigma \nabla u^+ \cdot X
\frac{\partial u^+}{\partial n}\, d\H - \int_\Sigma \nabla u^-
\cdot X \frac{\partial u^-}{\partial n}\, d\H.
\]
Recalling that
\[
\nabla u^\pm =\frac{\partial u^\pm}{\partial n} n,
\]
since the tangential derivatives of $u^\pm$ over $\Sigma$ vanish
(because $u^\pm=u=0$ on $\Sigma$), we get
\[
\int_{\Omega} u'f\,dx  =  \int_\Sigma  \left( \left(\frac{\partial
u^+}{\partial n}\right)^2 - \left(\frac{\partial u^-}{\partial
n}\right)^2 \right) X\cdot n \, d\H.
\]
Hence,
\[
\frac{\partial}{\partial\eps}\F(\Sigma_\eps){\Big|}_{\eps=0}=\int_\Sigma
\left( \left(\frac{\partial u^+}{\partial n}\right)^2 -
\left(\frac{\partial u^-}{\partial n}\right)^2 \right) X\cdot n \,
d\H-\lambda\la H_\Sigma,X\ra.
\]
Since this holds for every vector field $X$, we deduce the Euler
equation that must hold for every minimizer of $\mathcal{F}$:
\[
\left(\frac{\partial u^+}{\partial n}\right)^2 -
\left(\frac{\partial u^-}{\partial n}\right)^2 =\lambda\la
H_\Sigma, n\ra.
\]

\end{document}